\setlist[enumerate]{leftmargin=1.2em}
\setlist[itemize]{leftmargin=1.2em}
\definecolor{green}{rgb}{0,0.5,0} % Redefines the color green.
\newcommand{\Red}[1]{\begingroup\color{red} #1\endgroup} % Red
\newtheorem{theorem}{Theorem}[section]
\newtheorem{thm}{Theorem}[section]
\newtheorem{lem}[theorem]{Lemma}
\newtheorem{prop}[theorem]{Proposition}
\theoremstyle{definition}
\newtheorem{definition}[theorem]{Definition}
\theoremstyle{remark}
\newtheorem{remark}[theorem]{Remark}
\newtheorem{rem}[theorem]{Remark}
\numberwithin{equation}{section}
\numberwithin{equation}{section}
\newcommand{\nrm}[1]{\Vert#1\Vert}
\newcommand{\br}[1]{\overline{#1}}
\newcommand{\nnrm}[1]{{\vert\kern-0.25ex\vert\kern-0.25ex\vert #1 
		\vert\kern-0.25ex\vert\kern-0.25ex\vert}}
\newcommand{\dist}{\mathrm{dist}\,}
\newcommand{\supp}{{\mathrm{supp}}\,}
\newcommand{\lap}{\Delta}
\newcommand{\rd}{\partial}
\newcommand{\nb}{\nabla}
\newcommand{\ift}{\infty}
\newcommand{\alp}{\alpha}
\newcommand{\bt}{\beta}
\newcommand{\dlt}{\delta}
\newcommand{\eps}{\epsilon}
\newcommand{\veps}{\varepsilon}
\newcommand{\lmb}{\lambda}
\newcommand{\tht}{\theta}
\newcommand{\omg}{\omega}
\newcommand{\Omg}{\Omega}
\newcommand{\bfe}{{\bf e}}
\newcommand{\bfu}{{\bf u}}
\newcommand{\bfv}{{\bf v}}
\newcommand{\bfx}{{\bf x}}
\newcommand{\bbR}{\mathbb R}
\newcommand{\bbS}{\mathbb S}
\begin{document}
	\bibliographystyle{plain}
	\title{Support growth of vorticity for bi-rotational Euler flows in high dimensions}

	\renewcommand{\thefootnote}{\fnsymbol{footnote}}
	\footnotetext{\emph{2020 AMS Mathematics Subject Classification:} 76B47, 35Q35}
	\footnotetext{\emph{Key words: Vortex stretching; Bi-rotational symmetry; vorticity; Biot--Savart law} }
	\renewcommand{\thefootnote}{\arabic{footnote}}
	
	\author{In-Jee Jeong}
	\address{Department of Mathematical Sciences and RIM, Seoul National University, 1 Gwanak-ro, Gwanak-gu, Seoul 08826}
	\email{injee$ \_ $j@snu.ac.kr}
	
	\author{Deokwoo Lim}
	\address{The Research Institute of Basic Sciences, Seoul National University, 1 Gwanak-ro, Gwanak-gu, Seoul 08826, Republic of Korea.}
	\email{dwlim95@snu.ac.kr}

	\date\today
	\maketitle
	
	\begin{abstract}
		We study incompressible Euler equations in $\mathbb{R}^d$ with $d \ge 4$ under bi-rotational symmetry without swirl, which reduces the Euler equations to a scalar vorticity advection in the first quadrant. We show that patch type initial vorticities exhibit infinite growth of the support diameter. 
	\end{abstract}

	\medskip
	
	%	\Red{ 	\textbf{Notation}. Boldface letters for vectors. Use $(v,x,\omega,\Psi)$ for Cartesian coordinate, $(u,r,s,w,\psi)$ for bi-rotational coordinate.} \medskip 

	\section{Introduction}\label{sec:intro}

	\subsection{Bi-rotational symmetry}
	
	The incompressible Euler equations in $\bbR^d$ are given by
	\begin{equation}\label{eq:Eulereq}
		\left\{
		\begin{aligned}
			\rd_{t} \bfv +  \bfv \cdot\nb \bfv +\nb p&=0,\\
			\nb\cdot \bfv &=0, 
		\end{aligned}
		\right.
	\end{equation}
	where $ \bfv = (v^{1},\cdots,v^{d}) : [0,T)\times\bbR^{d}\to\bbR^{d} $ and $ p : [0,T)\times\bbR^{d}\to\bbR $. In this paper, we take $d \ge 4$ and study \eqref{eq:Eulereq} under the \textbf{bi-rotational symmetry}, using the bi-polar coordinates: we take two integers $n, m \ge 1$  where $d = n + m + 2\ge 4$, decompose $\bbR^d = \bbR^{n+1} \times \bbR^{m+1}$, and introduce two radii $r, s \ge 0$ by \begin{equation*}
		\begin{split}
			r^{2} = \sum_{i=1}^{n+1}x_{i}^{2}, \qquad s^{2} = \sum_{i=n+2}^{n+m+2}x_{i}^{2}. 
		\end{split}
	\end{equation*} We define two unit vectors in $\bbR^{d}$ by $\bfe^r = \frac1r(x_1,...,x_{n+1},0,...,0)$ and $\bfe^{s} = \frac1s(0,...,0,x_{n+2},...,x_{n+m+2})$. We shall refer to \textit{bi-rotational symmetric solutions without swirl} to be solutions of \eqref{eq:Eulereq} of the form 
	\begin{equation}\label{eq:birothighd}
		\bfv(t,\bfx) = u^{r}(t,r,s)\bfe^{r} + u^{s}(t,r,s)\bfe^{s}, \qquad \bfx = (x_{1},...,x_{n+m+2}), 
	\end{equation} where $u^{r}, u^{s}$ are scalar-valued. As long as the right-hand side of \eqref{eq:birothighd} satisfies some regularity and decay conditions at $t = 0$, we shall see that there is a unique corresponding local-in-time solution $\bfv(t,\bfx)$ to \eqref{eq:Eulereq}, which verifies the ansatz \eqref{eq:birothighd}. The term ``bi-rotational'' means that the velocity is invariant under all rotations of $\bbR^d$ which either fixes the first $n+1$ coordinates or the last $m+1$ coordinates, and ``without swirl'' means that $\bfv \in \mathrm{span}\{ \bfe^{r}, \bfe^{s} \}$ for every point in space-time. 
	
	\subsection{Vorticity formulation of the bi-rotational solutions}
	Recall that in $\bbR^d$ the vorticity is a two-tensor $\boldsymbol{\omg}$ defined from $\bfv$ by (cf. \cite{Chemin})
	\begin{equation}\label{eq:vorttensor}
		\boldsymbol{\omg} = (\omg^{i,j})_{1\leq i,j\leq d},\quad \omg^{i,j}=\rd_{j}v^{i}-\rd_{i}v^{j}.
	\end{equation} The corresponding stream function is defined component-wise by
	\begin{equation}\label{eq:streamftntensor}
		\Psi^{i,j}=\lap^{-1}\omg^{i,j}
	\end{equation} where $\lap^{-1}$ is the inverse Laplacian operator defined in $\bbR^{d}$. Under the bi-rotational symmetry assumption on the velocity \eqref{eq:birothighd}, there exist two scalar-valued functions of $r$ and $s$ satisfying 
	\begin{equation}\label{eq:vorttensca}
		\omg^{i,j}(\bfx)=-\frac{x_{i}x_{j}}{rs}w(r,s),\qquad \Psi^{i,j}(\bfx)=-\frac{x_{i}x_{j}}{rs}\psi(r,s)
	\end{equation} for all $1 \le i \le n+1, n+2 \le j \le n+m+2$ and $\bfx \in \bbR^{d}$. The proof is given in Section \ref{sec:computation}. With an abuse of notation, we shall refer to $w$ and $\psi$ as the (scalar) vorticity and stream function, respectively. Furthermore, the Euler equations reduce to a scalar evolution equation for $w$: 
	\begin{equation}\label{eq:vorteq}
		\rd_{t}w+(u^{r}\rd_{r}+u^{s}\rd_{s})w=\bigg(n\frac{u^{r}}{r}+m\frac{u^{s}}{s}\bigg)w,
	\end{equation}
	or,
	\begin{equation}\label{eq:vorticityeq}
		\rd_{t}\left(\frac{w}{r^{n}s^{m}}\right)+(u^{r}\rd_{r}+u^{s}\rd_{s})\left(\frac{w}{r^{n}s^{m}}\right)=0,
	\end{equation} where $w : [0,T) \times \Pi \to \bbR$ with $\Pi = \left\{ (r, s) \, : \, r, s \ge 0 \right\}$. The bi-rotational velocity $\bfu := (u^{r}, u^{s})$ satisfies the divergence-free condition (defined in $\bbR^d$) \begin{equation}\label{eq:divergencefree}
		\rd_{r}u^{r}+\frac{n}{r}u^{r}+\rd_{s}u^{s}+\frac{m}{s}u^{s}=0
	\end{equation} as well as $w = \rd_r u^{s} - \rd_s u^{r}$. As such, using the stream function $\psi$ which is related with $w$ by  \begin{equation}\label{eq:psi-w-relation}
		\left\{
		\begin{aligned}
			\bigg(\rd_{r}^{2}+\frac{n}{r}\rd_{r}-\frac{n}{r^{2}}+\rd_{s}^{2}+\frac{m}{s}\rd_{s}-\frac{m}{s^{2}}\bigg) \psi = w \quad &\mbox{in} \quad \Pi , \\
			\psi = 0 \quad &\mbox{on} \quad \partial\Pi = \left\{ (r,s) \, : \, r = 0 \mbox{ or } s = 0 \right\}, 
		\end{aligned}
		\right.
	\end{equation} we can recover $\bfu$ from $\psi$ from 
	\begin{equation}\label{eq:psicurl}
		u^{r}=-\frac{\rd_{s}(s^{m}\psi)}{s^{m}}=-\bigg(\frac{m}{s}\psi+\rd_{s}\psi\bigg),\quad u^{s}=\frac{\rd_{r}(r^{n}\psi)}{r^{n}}=\frac{n}{r}\psi+\rd_{r}\psi.
	\end{equation} The details of these %this 
	computations are in the Section \ref{sec:computation}.

	%	\begin{equation}\label{eq:BSlawCartesian}
		%		u^{i}=\sum_{j=1}^{d}\rd_{j}\Psi^{i,j}=\sum_{j=1}^{d}\rd_{j}\lap^{-1}\omg^{i,j}=\sum_{j=1}^{d}\lap^{-1}\rd_{j}\omg^{i,j}.
		%	\end{equation}

	%	\begin{equation}\label{eq:streamtensca}
		%		\begin{split}
			%			-\frac{x_{i}x_{j}}{rs}\psi(r,s)&=\Psi^{i,j}(x)=\lap^{-1}\omg^{i,j}(x)\\
			%			&=c_{d}\int_{\bbR^{d}}\frac{1}{|x-y|^{d-2}}\omg^{i,j}(y)dy=c_{d}\int_{\bbR^{d}}\frac{1}{|x-y|^{d-2}}\frac{y_{i}y_{j}}{\br{r}\br{s}}w(\br{r},\br{s})dy.
			%		\end{split}	\end{equation}
	
	%	Take $(i,j)=(1,n+2)$ and $\tht_{1}=\cdots=\tht_{n}=\phi_{1}=\cdots=\phi_{m}=0$.
	%	\begin{equation}\label{eq:streamftn}
		%		\begin{split}
			%			\psi(r,s)&=c_{d}\int_{\bbR^{d}}\frac{1}{(r^{2}-2ry_{1}+\br{r}^{2}+s^{2}-2sy_{n+2}+\br{s}^{2})^{d/2-1}}\frac{y_{i}y_{j}}{\br{r}\br{s}}w(\br{r},\br{s})dy\\
			%			&=c_{d}\iint_{\Pi}\iint_{[0,\pi]^{n+m}}\frac{\br{r}\br{s}\cos\br{\tht}_{1}\cdots\cos\br{\tht}_{n}\cos\br{\phi}_{1}\cdots\cos\br{\phi}_{m}%y_{i}y_{j}}{(r^{2}-2r\br{r}\cos\br{\tht}_{1}\cdots\cos\br{\tht}_{n}+\br{r}^{2}+s^{2}-2s\br{s}\cos\br{\phi}_{1}\cdots\cos\br{\phi}_{m}+\br{s}^{2})^{d/2-1}}\cdot\\
			%		&\qquad\qquad\qquad\qquad\qquad\qquad\qquad\qquad\qquad\qquad\qquad\qquad w(\br{r},\br{s})d\br{\tht}_{1}\cdots d\br{\tht}_{n}d\br{\phi}_{1}\cdots d\br{\phi}_{m}d\br{r}d\br{s}.
			%	\end{split}\end{equation}

	\subsection{Main result}
	
	\begin{comment} 
		\begin{remark}
			\Red{If we use notations above, then we may delete the following part: \\ Given a nonempty subset $A$ of $\Pi = \left\{ (r,s) \, : \, r,s\ge0 \right\}$, we set the $r$-support, $r$-distance and $r$-diameter to be \begin{equation*}
					\begin{split}
						\supp_{r}(A) := \sup\lbrace r : (r,s)\in A \rbrace , \quad \dist_{r}(A) := \inf\lbrace r  : (r,s)\in A \rbrace ,
					\end{split}
				\end{equation*}\begin{equation*}
					\begin{split}
						\mathrm{diam}_{r}(A) := \sup\lbrace r-r'  : (r,s), (r',s')\in A \rbrace.
					\end{split}
				\end{equation*} We define $\supp_{s}$, $\dist_{s}$, and $\mathrm{diam}_{s}$ similarly. In addition, we define
				\begin{equation*}
					\supp_{r,s}(A):=\max \lbrace \supp_{r}(A), \supp_{s}(A)\rbrace.
			\end{equation*}}
		\end{remark}
	\end{comment}
	
	The form of the equation \eqref{eq:vorticityeq} shows that $w/(r^n s^m)$ is simply being transported by $\bfu$. This allows us to consider the \textit{patch-type} solutions, where $w/(r^n s^m)$ is the indicator function of a (time-dependent) set in $\Pi$. 	We are interested in how the support size of $w$ evolves for patch-type solutions of \eqref{eq:vorticityeq}. Assuming that we are given a solution $w\in L^{\ift}(0,T;L^{\ift}(\bbR^{d}))$ which is compactly supported in $r$ and $s$ for each $t \in [0,T]$, we define  \begin{equation}\label{eq:def-RSL}
		\begin{split}
			R(t)&:=\sup \lbrace r : (r,s)\in \supp w(t)\rbrace,\quad S(t):=\sup \lbrace s : (r,s)\in \supp w(t)\rbrace,\quad L(t):=R(t)+S(t).
		\end{split}
	\end{equation}
	The following result shows local well-posedness for the patch-type data, when the support is initially separated from the axes. 
	
	\begin{thm}\label{thm:0}
		Take a bounded open set  $\Omg_{0}\subset \Pi$ which is separated from the boundary $\rd\Pi$ of the domain; that is, $\overline{\Omg_{0}} \cap \partial\Pi = \emptyset$. Then, consider the  patch-type initial datum 
		\begin{equation}\label{eq:patch}
			w_{0}(r,s)=r^{n}s^{m} \mathbf{1}_{\Omg_{0}}(r,s). 
		\end{equation} 
		There exist a time $T>0$ and  a local unique solution of the Euler equations corresponding to \eqref{eq:patch}, which also takes the form
		\begin{equation}\label{eq:patchsol}
			w(t,r,s)=r^{n}s^{m}\mathbf{1}_{\Omg_{t}}(r,s),\quad t\in[0,T%_{\text{max}}
			), 
		\end{equation}
		where $\Omg_{t}\in \Pi$ is the image of the set $\Omg_{0}$ along the flow generated by $\bfu$ in $\Pi$. Furthermore, the solution can be extended past $T$ if and only if  
		\begin{equation}\label{eq:blowupcrit}
			\sup_{t\in[0,T)}L(t)%\supp_{r,s}(\Omg_{t})
			<+\ift,
		\end{equation}  where $L(t)$ is defined in \eqref{eq:def-RSL}. 
	\end{thm}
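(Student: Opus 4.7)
The overall strategy is a Yudovich-type construction adapted to the bi-rotational setting: since \eqref{eq:vorticityeq} states that $w/(r^n s^m)$ is transported by $\bfu$, the patch ansatz \eqref{eq:patchsol} is automatically preserved provided we can define a unique flow $\Phi_t$ in $\Pi$ that sends $\Omega_0$ to $\Omega_t$. The three things I need to establish are: (i) a Biot--Savart estimate giving log-Lipschitz regularity of $\bfu$ from a bounded, compactly supported $w$; (ii) tangency of $\bfu$ to $\partial\Pi$, so that the flow preserves the domain and the separation from the axes; and (iii) a standard continuation argument that promotes these two to the stated blow-up criterion.

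For step (i), I would invert \eqref{eq:psi-w-relation} by lifting to $\bbR^d$ via the correspondence \eqref{eq:vorttensca}: the bi-rotational tensor $\omg^{i,j}$ obtained from $w$ is uniformly bounded with compact support in $\bbR^d$, so the Newtonian potential $\Psi^{i,j}=\lap^{-1}\omg^{i,j}$ produces a velocity $\bfv$ that is bounded and log-Lipschitz on $\bbR^d$ by classical Calder\'on--Zygmund/Yudovich estimates. Restricting to the fixed two-plane where $r,s\ge 0$, this translates into log-Lipschitz regularity of $\bfu=(u^r,u^s)$ on $\Pi$, with quantitative bounds depending only on $\|w/(r^n s^m)\|_{L^{\ift}}$ and on the diameter of $\supp w$. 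For step (ii), the boundary condition $\psi|_{\rd\Pi}=0$ in \eqref{eq:psi-w-relation} forces $\rd_s\psi|_{r=0}=\rd_r\psi|_{s=0}=0$, and together with $\psi|_{\rd\Pi}=0$ the formulas \eqref{eq:psicurl} give $u^r|_{r=0}=0$ and $u^s|_{s=0}=0$; thus $\bfu$ is tangent to $\rd\Pi$ and the flow preserves $\Pi$, with the distance of $\Omg_t$ to $\rd\Pi$ controlled from below by $\dist(\Omg_0,\rd\Pi)\exp(-\int_0^t\|\nb\bfu\|)$ whenever $\bfu$ is Lipschitz.

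For step (iii), I would run a standard Picard iteration: starting from $\Omg_t^{(0)}\equiv\Omg_0$, set $w^{(k)}(t)=r^n s^m \mathbf{1}_{\Omg_t^{(k)}}$, solve \eqref{eq:psi-w-relation}--\eqref{eq:psicurl} for $\bfu^{(k)}$, and let $\Phi^{(k+1)}$ be the flow of $\bfu^{(k)}$ in $\Pi$, with $\Omg_t^{(k+1)}=\Phi_t^{(k+1)}(\Omg_0)$. The uniform log-Lipschitz bound from step (i) (which stays uniform as long as $\Omg_t^{(k)}$ stays in a fixed compact subset of $\Pi^{\circ}$) together with the boundary tangency from step (ii) yields Osgood-type contraction on a short interval $[0,T]$, and the limit satisfies \eqref{eq:patchsol}; uniqueness follows again by log-Lipschitz/Osgood. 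Finally, for the blow-up criterion, suppose $\sup_{[0,T)}L(t)\le M$. Then $\supp w(t)\subset[0,M]^2$ and $\|w(t)\|_{L^{\ift}}\le M^{n+m}$, so the Biot--Savart estimate gives a uniform log-Lipschitz bound on $\bfu$ on $[0,T)$, hence a uniform positive lower bound on $\dist(\Omg_t,\rd\Pi)$ by the Gronwall-type estimate above. This makes all quantities needed to restart the existence argument uniformly controlled on $[0,T)$, so the solution extends past $T$. The main obstacle in practice is the Biot--Savart step near $\rd\Pi$: the weights $r^n,s^m$ and the factors $\psi/r$, $\psi/s$ in \eqref{eq:psicurl} appear singular, and one must either carry out careful estimates directly in $\Pi$ or exploit the lift to $\bbR^d$ (which I prefer) to sidestep them, noting that the a~priori separation of $\Omg_t$ from $\rd\Pi$ is exactly what keeps these weights harmless throughout the argument.
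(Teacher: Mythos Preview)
Your outline is correct and close in spirit to the paper's argument: both approaches lift to $\bbR^d$ via \eqref{eq:vorttensca} to get a bounded, compactly supported vorticity tensor and hence a log-Lipschitz velocity, and both reduce the blow-up criterion to controlling $L(t)$. The paper, however, packages things somewhat differently. Rather than tracking $\dist(\Omg_t,\rd\Pi)$ geometrically, it introduces the norm
\[
\nrm{w}_{X} := \nrm{w}_{L^{\ift}} + \bigg\| (1+r^{n}+s^{m})\frac{w}{r^{n}s^{m}}\bigg\|_{L^{\ift}}
\]
and proves a general local well-posedness result (Proposition~\ref{prop:lwp}) for compactly supported $w_0\in X$. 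For patch data, all pieces of $\nrm{w(t)}_X$ are automatically bounded by powers of $L(t)$, so the boundary-distance tracking you carry out is not actually needed. The paper's a~priori estimates are also more explicit: from the Cartesian Biot--Savart law and $\supp\boldsymbol{\omg}(t)\subset\{|x|\le L(t)\}$ one gets $\nrm{\bfu(t)}_{L^\infty}\aleq L(t)^{d-1}$ and hence the closed ODE $\dot L\aleq L^{d-1}$, which immediately yields both the local existence time and the integral blow-up criterion $\int_0^{T^\ast}L^{d-1}\,dt=+\infty$; the equivalence with $\sup L=+\infty$ then follows from $\dot L\le L\nrm{w}_{L^\infty}$. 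Your Osgood/double-exponential route gives the same conclusion but is less sharp and carries the superfluous boundary-separation bookkeeping; the paper's $X$-norm and $L$-ODE make the continuation argument cleaner.
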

	\begin{rem}
		We note that for $t < T$, $w \in L^\infty(\Pi)$ and this implies $\boldsymbol{\omg} \in L^\infty(\bbR^d)$. Moreover, $\boldsymbol{\omg}$ is compactly supported in space and this guarantees that the corresponding velocity $\bfv$ is log-Lipschitz in $\bbR^d$. In turn, $\bfu = (u^r, u^s)$ is log-Lipschitz in $\Pi$ and $w/(r^n s^m)$ is being transported by the flow map generated by $\bfu$ in $\Pi$.  
	\end{rem}
	
	Given $\Omg_{0}$ satisfying the assumptions of Theorem \ref{thm:0}, we may define $T_{max} = T_{max}(\Omg_{0}) \in (0,+\infty]$ to be the maximal lifespan of the solution with initial datum \eqref{eq:patch}. The following result shows that even when $T_{max} = +\infty$, we always have $L(t) \to \infty$ along a sequence of time moments.  
	
	\begin{thm}\label{thm:1} 
		Let $w(t)$ be the local-in-time solution from Theorem \ref{thm:0} with the maximal time of existence $T_{\text{max}}>0$. Then the support of $w(t)$ satisfies
		\begin{equation}\label{eq:patchstretching}
			\sup_{t\in[0,T_{\text{max}})} L(t)%\supp_{r}(\Omg_{t}) 
			= +\ift.
		\end{equation} %Furthermore, for any strictly increasing sequence $T_k \to T_{\max}$, at least one of the following holds up to taking a subsequence: (i) either $\dist_{s}(\Omg_{t}) \to 0$, (ii) or $\mathrm{diam}_{r}(\Omg_{t})\to\infty$. 
	\end{thm}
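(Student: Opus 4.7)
The argument proceeds by contradiction. Suppose that $L(t) \le M < +\ift$ for all $t \in [0, T_{\max})$; then the blowup criterion \eqref{eq:blowupcrit} of Theorem \ref{thm:0} forces $T_{\max} = +\ift$, so the patch solution is global and $\Omg_t \sbeq [0, M]^2$ for every $t \ge 0$. The goal is to derive a contradiction from these hypotheses.

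The starting point is a moment identity. Since $f := w/(r^n s^m) = \mathbf{1}_{\Omg_t}$ is transported by $\bfu$, while $r^n s^m \bfu$ is divergence-free in $(r,s)$ (a restatement of \eqref{eq:divergencefree}), for every smooth test function $h(r, s)$ one has
\begin{equation*}
\frac{d}{dt}\int_\Pi h(r, s)\, w(t, r, s) \, dr\, ds = \int_\Pi (\bfu \cdot \nb h)\, w\, dr\, ds.
\end{equation*}
Taking $h \equiv 1$ already gives conservation of the total mass $\int_\Pi w\, dr\, ds \equiv \int_{\Omg_0} r^n s^m\, dr\, ds =: C_0 > 0$.

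The strategy is then to exhibit a specific $h$ for which (i) the left-hand moment is a priori bounded under $\Omg_t \sbeq [0, M]^2$, while (ii) the right-hand bilinear form in $w$ admits a strictly positive uniform lower bound. A natural first attempt is $h = r + s$, which gives $\int_\Pi(r+s)w\, dr\, ds \le MC_0$ on the one hand, and $\frac{d}{dt}\int (r+s)w = \int (u^r + u^s) w\, dr\, ds$ on the other. Substituting the Biot--Savart law \eqref{eq:psi-w-relation}--\eqref{eq:psicurl} expresses the latter as a symmetric double integral of $w \otimes w$ against a kernel built from the Green's function of $\mathcal{L}$ on the quarter-plane $\Pi$. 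If this bilinear form is bounded below by a constant $c_0 > 0$ depending only on $M$, $C_0$, and $\Omg_0$, then $\int(r+s)w$ must grow at least linearly in $t$, contradicting the upper bound $MC_0$ and establishing \eqref{eq:patchstretching}.

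The main obstacle is establishing this uniform positive lower bound on the bilinear form. It should draw on (i) the pointwise nonnegativity $w \ge 0$, preserved for all time by the transport structure, (ii) the conservation of the weighted measure $\int_{\Omg_t} r^n s^m\, dr\, ds$, which prevents $\Omg_t$ from concentrating entirely on $\rd \Pi$ where the Biot--Savart kernel degenerates, and (iii) the high-dimensional structure $n, m \ge 1$, which is expected to yield strict positivity of the symmetrized Biot--Savart kernel in the interior of $\Pi \times \Pi$. An alternative Lagrangian route would bypass the global bilinear-form estimate by tracking the outermost Lagrangian trajectory of $\supp w(t)$ in the $r+s$ direction and proving a pointwise lower bound $u^r + u^s \ge c_0$ at extremal points, then iterating to cover every $t$. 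Both routes ultimately reduce to a quantitative positivity property of the quarter-plane Biot--Savart kernel, which is the crucial technical input.
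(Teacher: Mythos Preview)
Your overall architecture --- assume $L(t)\le M$, deduce $T_{\max}=+\infty$, find a moment that must grow linearly while being bounded --- is exactly what the paper does. But the proposal stops short of a proof: you name ``the main obstacle'' and list ingredients that \emph{should} yield the lower bound, without carrying any of it out. There is a real difficulty hiding here, and your choice $h=r+s$ does not get around it.

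The paper does not use $h=r+s$. It uses the impulses $P^r=\frac{1}{n+1}\iint r^{n+1}w$ and $P^s=\frac{1}{m+1}\iint s^{m+1}w$. The specific weights $r^{n+1}$, $s^{m+1}$ are what make the argument work: after inserting $w=\partial_r u^s-\partial_s u^r$ and integrating by parts against the divergence-free condition \eqref{eq:divergencefree}, one obtains
\[
\dot P^r = m\iint_\Pi \frac{r^n}{s}(u^s)^2 + \tfrac12\int_{\{s=0\}} r^n (u^r)^2 \ge 0,
\qquad
\dot P^s = -n\iint_\Pi \frac{s^m}{r}(u^r)^2 - \tfrac12\int_{\{r=0\}} s^m (u^s)^2 \le 0.
\]
These are \emph{sign-definite} expressions --- sums of squares --- and that is the crucial algebraic identity you are missing. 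If you run the same integration by parts with your $h=r+s$, a cross term $\int \frac{n}{r}u^r u^s$ survives with no obvious sign, so there is no reason to expect $\frac{d}{dt}\int(r+s)w\ge c_0>0$.

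The second ingredient you do not have is how to keep the lower bound \emph{uniform in $t$}. Since $\dot P^r$ involves only the boundary integral $\int r^n(u^r(r,0))^2\,dr$, and $u^r(r,0)=-(m+1)\partial_s\psi(r,0)$ is expressed via a kernel that degenerates as $\bar s\to\infty$, the bound would collapse if mass could drift to large $s$. The paper uses the \emph{other} monotonicity, $\dot P^s\le 0$, to rule this out: since $\iint_{\Omega_t} r^n s^{2m+1}\le \iint_{\Omega_0} r^n s^{2m+1}$ while $\iint_{\Omega_t} r^n s^m$ is conserved, a Chebyshev-type argument pins at least half the weighted mass inside a strip $\{s\le\alpha\}$ with $\alpha$ independent of $t$. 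From there an explicit lower bound on $\partial_s\psi(r,0)$ gives $\dot P^r\ge\bar c>0$, contradicting $P^r\le \frac{M^{n+1}}{n+1}C_0$. Your item (ii), ``conservation prevents concentration on $\partial\Pi$,'' gestures at a related concern but misses that the actual danger is escape to large $s$, and that it is controlled by the $s$-impulse, not by mass conservation alone.
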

	
	\begin{rem}
		Using the time reversibility of the Euler equations, we can obtain the same conclusion when we solve the initial datum backwards in time. This corresponds to simply flipping the sign of the initial datum and solve forwards in time.  In a similar vein, the conclusion also holds when $\omg_{0} = \lambda r^n s^m \mathbf{1}_{\Omg_{0}}(r,s)$ for any constant $\lambda \ne 0$. 
	\end{rem}
	
	Unfortunately, the proof of Theorem \ref{thm:1} is largely based on a contradiction argument and it does not provide detailed information about the shape of the set $\Omg_{t}$ as $t\to T_{max}$. For instance, one can ask what happens to the diameter, perimeter, curvature of the patch. Furthermore, we were not able to show that $\nrm{w(t,\cdot)}_{L^\infty}\to+\infty$ as $t\to\infty$ when $T_{max} = +\infty$. This is because along the flow trajectories, $r^n s^m$ can remain bounded (or even decay) on $\Omg_{t}$. We leave these issues as open problems. 
	
	\subsection{Discussion of the main results}\label{ssec:literature}
	
	While it is known that the incompressible Euler equations in high dimensions can exhibit all kinds of dynamics (\cite{TaoUniv,DE}), in general it is very difficult to analyze the solutions, as \eqref{eq:Eulereq} is a system of $d+1$ scalar functions defined in $\bbR^d$. Assuming appropriate rotational symmetries on the high dimensional Euler solutions, one may reduce both the number of dependent and independent variables, while retaining some interesting dynamics. The most widely studied case is when the solution is assumed to be invariant under all rotations fixing an axis, and there have been several recent progress on high-dimensional axisymmetric Euler equations (\cite{CJLglobal22, Miller, GMT2023, Limglobal23, LJ_optimal}), concerning well-posedness, confinement, singularity formation, and so on. It seems that for higher dimensions, there is a better chance for finite-time singularity formation. 
	
	On the other hand, dynamics under the bi-rotational symmetry  \eqref{eq:birothighd} seems to have been less studied. The bi-rotational equation \eqref{eq:vorticityeq} appears in the works of Khesin--Yang \cite{KhYa} and Yang \cite{Yang} (they use the terminology $\bbS^n \times \bbS^m$-symmetric solutions), where they noted that for any $n,m\ge 1$, it is a special case of the so-called lake equations with the depth function $d(r,s) = r^n s^m$. The works \cite{KhYa,Yang} study the point vortex motion evolving by the skew-mean curvature
	flow in the $(r,s)$-plane, which corresponds to ``vortex membranes'' of codimension two. The resulting formal ODE system of the point vortex motion blows up in finite time for $d \ge 5$, but it seems very difficult to justify the ODE using smooth solutions. One could replace the point vortex by a highly concentrated patch and study the resulting patch dynamics which is well-defined by Theorem \ref{thm:0}. 
	
	Recently, \cite{CJL_gwpbi} studied the bi-rotational equation in the simplest case $n = m = 1$ (i.e. $d = 4$) and obtained global well-posedness of $w$ in the Yudovich-type class \begin{equation*}
		\begin{split}
			(1+r+s) \frac{w}{rs} \in L^{4,1}(\bbR^4) \quad\mbox{ and } \quad w \in (L^{\infty} \cap L^{4,1})(\bbR^4)
		\end{split}
	\end{equation*} where $L^{p,q}$ denotes the Lorenz space. The condition $L^{4,1}$ is inspired by the work of Danchin \cite{Danaxi} in 3D axisymmetric flows and is probably sharp. This result implies in particular that sufficiently smooth and decaying initial data $\bfv_{0}$ has a global-in-time unique smooth solution under bi-rotational symmetry in $\bbR^4$. Global well-posedness of bi-rotational solutions seems to be open when $d \ge 5$.  
	
	For bi-rotational symmetric solutions, when the scalar vorticity is compactly supported, blowup is possible only if the support of the vorticity diverges to infinity in finite time (see Section \ref{sec:lwp} for details). This was our main motivation behind Theorem \ref{thm:1}. For high dimensional axisymmetric solutions, there are recent works (\cite{CJ-axi,GMT2023}) providing some quantitative lower bounds on the support growth in the $r$-direction, where $r^2 ={x_1^2+...+x_{d-1}^2}$ and $z = x_{d}$. Gustafson--Miller--Tsai proves in \cite{GMT2023} the lower bounds \begin{equation*}
		\begin{split}
			\iint_{ r,z \in [0,\infty) } r^{d-1} w(t,r,z) drdz \gtrsim \begin{cases}
				(1+t)^{3/4-\eps}, \quad & d = 3, \\
				(1+t)^{2/3-\eps}, \quad & d = 4 \\ 
				(1+t)^{d/(d^2-2d-2)-\eps}, \quad & d \ge 5
			\end{cases}
		\end{split}
	\end{equation*} where the implicit constant depends on $w_{0}, d, \eps>0$, assuming that the solution does not blowup until time $t$. This in particular gives a quantitative growth rate for the support radius in $r$. On the other hand, our growth result is not quantitative and works only for patch-type data. Still, it is the first infinite growth result for bi-rotational solutions. In this case, the difficulty comes from understanding the kernel of the velocity in terms of the scalar vorticity; the proof of \cite{GMT2023} utilizes the representation of the velocity kernel in terms of certain elliptic integrals (cf. \cite{FeSv}), which is not available in the bi-rotational case.

	%	\subsection{Key ideas}\label{ssec:keyideas}
	
	%	\begin{rem} The support growth \eqref{eq:patchstretching} implies that $\nrm{w(t,\cdot)}_{L^{\infty}(\Pi)} \to \infty$ and therefore $\nrm{\nb\times \bfv(t,\cdot)}_{L^{\infty}(\bbR^{d})} \to \infty$ as $t \to T_{\max}$ where $\bfv$ is the corresponding solution to \eqref{eq:Eulereq} defined in $\bbR^{d}$.   \end{rem}

	\subsection*{Organization of the paper} In Section \ref{sec:lwp}, we derive the relation between the stream function, velocity and vorticity under the bi-rotational symmetry. We prove Theorems \ref{thm:0} and \ref{thm:1} in Section \ref{sec:lwp} and Section \ref{sec:growth}, respectively. 
	
	\subsection*{Acknowledgments}
	I.-J. Jeong was supported by the NRF grant from the Korea government (MSIT), No. 2022R1C1C1011051, RS-2024-00406821. D. Lim was supported by the National Research Foundation of Korea grant RS-2024-00350427.
	
	\section{Computation of the vorticity and the stream function}\label{sec:computation}
	
	In this %appendix 
	section, we present computations of the vorticity and the stream function under the bi-rotational symmetry and no-swirl condition on the velocity \eqref{eq:birothighd} to show existence %existences 
	of the scalar vorticity $w$ and the scalar stream function $\psi$ satisfying \eqref{eq:vorttensca}.

	\subsection{Computation of the vorticity}
	
	Under the bi-rotational symmetry and no-swirl condition \eqref{eq:birothighd}, we have
	\begin{equation}\label{eq:uiur}
		v^{i}%u^{i}
		=\begin{cases}
			\frac{\rd r}{\rd x_{i}}u^{r}=\frac{x_{i}}{r}u^{r},&\quad i=1,\cdots,n+1,\\
			\frac{\rd s}{\rd x_{i}}u^{s}=\frac{x_{i}}{s}u^{s},&\quad i=n+2,\cdots,n+m+2.
		\end{cases}
	\end{equation}
	For any $ 1\leq i, j \leq n+1$, we have
	\begin{equation}
		\begin{split}
			\omg^{i,j}&=\rd_{x_{j}}v^{i}%u^{i}
			-\rd_{x_{i}}v^{j}%u^{j}
			\\
			&=\bigg[\frac{\rd r}{\rd x_{j}}\rd_{r}+\sum_{l=1}^{n}\frac{\rd \tht_{l}}{\rd x_{j}}\rd_{\tht_{l}}\bigg]\bigg(\frac{\rd r}{\rd x_{i}}u^{r}\bigg)-\bigg[\frac{\rd r}{\rd x_{i}}\rd_{r}+\sum_{l=1}^{n}\frac{\rd \tht_{l}}{\rd x_{i}}\rd_{\tht_{l}}\bigg]\bigg(\frac{\rd r}{\rd x_{j}}u^{r}\bigg)%\\
			%&=\frac{\rd r}{\rd x_{j}}\frac{\rd r}{\rd x_{i}}\rd_{r}u^{r}+-\frac{\rd r}{\rd x_{i}}\frac{\rd r}{\rd x_{j}}\rd_{r}u^{r}
			=0.
		\end{split}
	\end{equation}
	Similarly, for any $n+2\leq i, j \leq n+m+2$, we get
	\begin{equation}
		\begin{split}
			\omg^{i,j}=\bigg[\frac{\rd s}{\rd x_{j}}\rd_{s}+\sum_{l=1}^{m}\frac{\rd \phi_{l}}{\rd x_{j}}\rd_{\phi_{l}}\bigg]\bigg(\frac{\rd s}{\rd x_{i}}u^{s}\bigg)-\bigg[\frac{\rd s}{\rd x_{i}}\rd_{s}+\sum_{l=1}^{m}\frac{\rd \phi_{l}}{\rd x_{i}}\rd_{\phi_{l}}\bigg]\bigg(\frac{\rd s}{\rd x_{j}}u^{s}\bigg)%\\
			%&=\frac{\rd r}{\rd x_{j}}\frac{\rd r}{\rd x_{i}}\rd_{r}u^{r}+-\frac{\rd r}{\rd x_{i}}\frac{\rd r}{\rd x_{j}}\rd_{r}u^{r}
			=0.
		\end{split}
	\end{equation}
	On the other hand, for any $1\leq i \leq n+1$ and $ n+2 \leq j \leq n+m+2 $, we have
	\begin{equation}
		\begin{split}
			\omg^{i,j}
			=\bigg[\frac{\rd s}{\rd x_{j}}\rd_{s}+\sum_{l=1}^{m}\frac{\rd \phi_{l}}{\rd x_{j}}\rd_{\phi_{l}}\bigg]\bigg(\frac{\rd r}{\rd x_{i}}u^{r}\bigg)-\bigg[\frac{\rd r}{\rd x_{i}}\rd_{r}+\sum_{l=1}^{n}\frac{\rd \tht_{l}}{\rd x_{i}}\rd_{\tht_{l}}\bigg]\bigg(\frac{\rd s}{\rd x_{j}}u^{s}\bigg) =-\frac{x_{i}x_{j}}{rs}(\rd_{r}u^{s}-\rd_{s}u^{r}),
		\end{split}
	\end{equation}
	and $\omg^{j,i}=-\omg^{i,j}$. We denote $w:=\rd_{r}u^{s}-\rd_{s}u^{r}$.
	
	\subsection{Computation of the stream function}
	
	Recall that the divergence-free condition \eqref{eq:divergencefree} implies the existence of the scalar function $\psi$ that satisfies
	\begin{equation}\label{eq:uruspsi}
		u^{r}=-\bigg(\frac{m}{s}\psi+\rd_{s}\psi\bigg),\quad u^{s}=\frac{n}{r}\psi+\rd_{r}\psi,
	\end{equation}
	which solves the equation \eqref{eq:psi-w-relation}:
	\begin{equation*}
		\left\{
		\begin{aligned}
			\bigg(\rd_{r}^{2}+\frac{n}{r}\rd_{r}-\frac{n}{r^{2}}+\rd_{s}^{2}+\frac{m}{s}\rd_{s}-\frac{m}{s^{2}}\bigg) \psi = w \quad &\mbox{in} \quad \Pi , \\
			\psi = 0 \quad &\mbox{on} \quad \partial\Pi = \left\{ (r,s) \, : \, r = 0 \mbox{ or } s = 0 \right\}, 
		\end{aligned}
		\right.
	\end{equation*}
	Note that 	we can rewrite the equation \eqref{eq:psi-w-relation} as
		%from \eqref{eq:psi-w-relation}, we obtain the equation
		\begin{equation}\label{eq:rslapzt}
			\lap_{\bbR^{n+3}\times\bbR^{m+3}}\bigg(\frac{\psi}{rs}\bigg)=\frac{w}{rs},
			%\eta=\lap_{\bbR^{n+3}\times\bbR^{m+3}}^{-1}[(rs)^{-1}w].
		\end{equation}
		where $\lap_{\bbR^{n+3}\times\bbR^{m+3}}$ is the Laplacian in the space $\bbR^{d+4}=\bbR^{n+3}\times\bbR^{m+3}$. Then %denoting $W:=(rs)^{-1}w$, 
		we can recover $ \psi $ as
		\begin{equation}
			\psi%(r,s)
			%=rs\zt%(r,s)
			%=rs\lap_{\bbR^{n+3}\times\bbR^{m+3}}^{-1}W%(r,s)
			=rs\lap_{\bbR^{n+3}\times\bbR^{m+3}}^{-1}\bigg[\frac{w}{rs}\bigg].
		\end{equation}
		Due to the rotational invariance of $ \lap_{\bbR^{n+3}\times\bbR^{m+3}} $ and the dependence of $w$ only on $r$ and $s$, the same dependence holds for $\psi$ as well. 
		
		Now observe that for any $1\leq i\leq n+1,\; n+2\leq j\leq n+m+2$, taking the Laplacian $\lap$ in $\bbR^{d}$ on the term $(x_{i}x_{j}\psi)/(rs)$ gives us
		\begin{equation}
			\begin{split}
				\lap\bigg(\frac{x_{i}x_{j}}{rs}\psi\bigg)&=\lap\bigg(\frac{x_{i}x_{j}}{rs}\bigg)\psi+2\nb\bigg(\frac{x_{i}x_{j}}{rs}\bigg)\cdot\nb\psi+\frac{x_{i}x_{j}}{rs}\lap\psi\\
				&=\frac{x_{i}x_{j}}{rs}\bigg[-\bigg(\frac{n}{r^{2}}+\frac{m}{s^{2}}\bigg)+0+\bigg(\rd_{r}^{2}+\frac{n}{r}\rd_{r}+\rd_{s}^{2}+\frac{m}{s}\rd_{s}\bigg)\bigg]\psi=\frac{x_{i}x_{j}}{rs}w=-\omg^{i,j}.
			\end{split}
		\end{equation}
		We used the equation \eqref{eq:psi-w-relation} in the third equality. From this, for any $\bfx\in \bbR^{d}$, $r,s\geq0$, we can write the stream function \eqref{eq:streamftntensor} in the following form:%have the stream function
		\begin{equation}\label{eq:streamftnrel}
			\Psi^{i,j}(\bfx)=\lap^{-1}\omg^{i,j}(\bfx)=-\frac{x_{i}x_{j}}{rs}\psi(r,s),
		\end{equation}
		and $\Psi^{j,i}=-\Psi^{i,j}$ when %for any 
		$1\leq i\leq n+1,\; n+2\leq j\leq n+m+2$, and $\Psi^{i,j}\equiv0$ when $1\leq i,j\leq n+1$ or $n+2\leq i,j \leq n+m+2$. In particular, by taking $i=1$, $j=n+2$, and $\tht_{1}=\cdots=\tht_{n}=\phi_{1}=\cdots=\phi_{m}=0$ on the above, we can obtain the explicit form of the scalar stream function $\psi$ as
		\begin{equation}\label{eq:streamftn}
			\begin{split}
				&\psi(r,s)=-\lap^{-1}\omg^{1,n+2}(\bfx)\big|_{\tht_{1}=\cdots=\tht_{n}=\phi_{1}=\cdots=\phi_{m}=0}\\
				&\;=c_{d}\int_{\bbR^{d}}\frac{1}{(r^{2}+\br{r}^{2}-2ry_{1}+s^{2}+\br{s}^{2}-2sy_{n+2})^{d/2-1}}\frac{y_{1}y_{n+2}}{\br{r}\br{s}}w(\br{r},\br{s})dy\\
				&\;=c_{d}\iint_{\Pi}\int_{0}^{2\pi}%\int\cdots
				\int_{[0,\pi]^{m-1}}\int_{0}^{2\pi}%\int\cdots
				\int_{[0,\pi]^{n-1}}\frac{1}{(r^{2}+\br{r}^{2}-2r\br{r}\cos\br{\tht}_{1}+s^{2}+\br{s}^{2}-2s\br{s}\cos\br{\phi}_{1})^{d/2-1}}\\
				&\quad
				\cdot\cos\br{\tht}_{1}\cos\br{\phi}_{1}w(\br{r},\br{s})
				%\sin^{n-1}\br{\tht}_{1}\sin^{n-2}\br{\tht}_{2}\cdots\sin\br{\tht}_{n-1}
				\bigg(\br{r}^{n}\br{s}^{m}\prod_{i=1}^{n-1}\sin^{n-i}\br{\tht}_{i}\prod_{j=1}^{m-1}\sin^{m-j}\br{\phi}_{j}\bigg)d\br{\tht}_{1}\cdots d\br{\tht}_{n-1}d\br{\tht}_{n}d\br{\phi}_{1}\cdots d\br{\phi}_{m-1}d\br{\phi}_{m}d\br{r}d\br{s}
				%\sin^{m-1}\br{\phi}_{1}\sin^{m-2}\br{\phi}_{2}\cdots\sin\br{\phi}_{m-1}
				\\
				&\;=c_{d}\iint_{\Pi}\int_{0}^{\pi}\int_{0}^{\pi}\frac{\br{r}^{n}\br{s}^{m}\sin^{n-1}\br{\tht}_{1}\cos\br{\tht}_{1}\sin^{m-1}\br{\phi}_{1}\cos\br{\phi}_{1}}{(r^{2}+\br{r}^{2}-2r\br{r}\cos\br{\tht}_{1}+s^{2}+\br{s}^{2}-2s\br{s}\cos\br{\phi}_{1})^{d/2-1}}w(\br{r},\br{s})d\br{\tht}_{1}d\br{\phi}_{1}d\br{r}d\br{s},
				%=c_{d}\iint_{\Pi}\iint_{[0,\pi]^{n+m}}\frac{\Red{\br{r}^{n}\br{s}^{m}\cos\br{\tht}_{1}\cdots\cos\br{\tht}_{n}
					%\cos\br{\phi}_{1}\cdots\cos\br{\phi}_{m}}}{(r^{2}+\br{r}^{2}-2r\br{r}\cos\br{\tht}_{1}\cdots\cos\br{\tht}_{n}
					%+s^{2}+\br{s}^{2}-2s\br{s}\cos\br{\phi}_{1}\cdots\cos\br{\phi}_{m})^{d/2-1}}\cdot\\
				%&\qquad\qquad\qquad\qquad\qquad\qquad\qquad\qquad\qquad\qquad\qquad\qquad w(\br{r},\br{s})d\br{\tht}_{1}\cdots d\br{\tht}_{n}d\br{\phi}_{1}\cdots d\br{\phi}_{m}d\br{r}d\br{s}.
			\end{split}
		\end{equation}
		where $c_{d}>0$ denotes a constant that depends only on $d$ which may change line by line. 
		Indeed, one can use the Biot--Savart law in the Cartesian coordinates (cf. \cite{Chemin})
		\begin{equation}\label{eq:BSlawCartesian}
			v^{i}%u^{i}
			=\sum_{j=1}^{d}\rd_{x_{j}}\Psi^{i,j}%=\sum_{j=1}^{d}\rd_{j}\lap^{-1}\omg^{i,j}=\sum_{j=1}^{d}\lap^{-1}\rd_{j}\omg^{i,j}.
		\end{equation}
		to compute $v^{1}$ %$u^{\Red{n+1}}$ 
		and $v^{n+2}$, %$u^{\Red{n+m+2}}$, 
		and use the %equality 
		change of coordinates 
		\begin{equation}
			\frac{u^{r}}{r}=\frac{v^{1}}{x_{1}},\quad \frac{u^{s}}{s}=\frac{v^{n+2}}{x_{n+2}},
		\end{equation}
		%$$ \frac{u^{r}}{r}=\frac{u^{\Red{n+1}}}{x_{\Red{n+1}}},\quad \frac{u^{s}}{s}=\frac{u^{\Red{n+m+2}}}{x_{\Red{n+m+2}}} $$
		to check that $\psi$ satisfies the relation \eqref{eq:uruspsi}:
		\begin{equation}
			\begin{split}
				u^{r}&=\frac{r}{x_{1}}v^{1}=-\frac{r}{x_{1}}\bigg[\cos\tht_{1}\bigg(\rd_{s}\psi+\frac{m}{s}\psi\bigg)\bigg]=-\bigg(\rd_{s}\psi+\frac{m}{s}\psi\bigg), \\
				u^{s} &=\frac{s}{x_{n+2}}v^{n+2}=\frac{s}{x_{n+2}}\bigg[\cos\phi_{1}\bigg(\rd_{r}\psi+\frac{n}{r}\psi\bigg)\bigg]=\rd_{r}\psi+\frac{n}{r}\psi.
			\end{split}
		\end{equation}
		\begin{comment}
			\begin{split}
				u^{r}&=\frac{r}{x_{\Red{n+1}}}u^{\Red{n+1}}=-\frac{r}{x_{\Red{n+1}}}\bigg[\Red{\sin}\tht_{1}\bigg(\rd_{s}\psi+\frac{m}{s}\psi\bigg)\bigg]=-\bigg(\rd_{s}\psi+\frac{m}{s}\psi\bigg) \\
                u^{s} &=\frac{s}{x_{\Red{n+m+2}}}u^{\Red{n+m+2}}=\frac{s}{x_{\Red{n+m+2}}}\bigg[\Red{\sin}\phi_{1}\bigg(\rd_{r}\psi+\frac{n}{r}\psi\bigg)\bigg]=\rd_{r}\psi+\frac{n}{r}\psi.
			\end{split}
		\end{comment} 
		
		\begin{comment}
			Using the new coordinate transformation, we can change the above as
			\begin{equation}
				\frac{u^{r}}{r}=\frac{u^{1}}{x_{1}},\quad \frac{u^{s}}{s}=\frac{u^{n+2}}{x_{n+2}},
			\end{equation}
			
			\begin{equation}
				\begin{split}
					u^{r}&=\frac{r}{x_{1}}u^{1}=-\frac{r}{x_{1}}\bigg[\cos\tht_{1}\bigg(\rd_{s}\psi+\frac{m}{s}\psi\bigg)\bigg]=-\bigg(\rd_{s}\psi+\frac{m}{s}\psi\bigg), \\
					u^{s} &=\frac{s}{x_{n+2}}u^{n+2}=\frac{s}{x_{n+2}}\bigg[\cos\phi_{1}\bigg(\rd_{r}\psi+\frac{n}{r}\psi\bigg)\bigg]=\rd_{r}\psi+\frac{n}{r}\psi.
				\end{split}
			\end{equation} 
		\end{comment}
	 
	\begin{remark}
		The term $s^{-1}\psi$ from \eqref{eq:psicurl} is non-singular at $s=0$ and non-negative; one can check by a direct computation that 
		$$ \frac{\psi(r,s)}{s}\bigg|_{s=0}=\rd_{s}\psi(r,0). $$
	\end{remark}
	
	\section{Local well-posedness of the patch-type solution and blowup criterion} \label{sec:lwp}
	
	We provide a more general local well-posedness result, which implies Theorem \ref{thm:0}. For simplicity, we define the norm \begin{equation}\label{eq:X}
		\begin{split}
			\nrm{w}_{X} := \nrm{w}_{L^{\ift}(\bbR^{d})} + \bigg\| (1+r^{n}+s^{m})\frac{w}{r^{n}s^{m}}\bigg\|%\nrm{(1+r^{n}+s^{m})r^{-n}s^{-m}w}
			_{ L^{\ift}(\bbR^{d})}. 
		\end{split}
	\end{equation}
	
	\begin{prop}\label{prop:lwp}
		Assume that the initial datum $w_{0}(r,s)\in X$ is compactly supported. Then there exist $T>0$ and  a unique solution  of \eqref{eq:vorticityeq} that satisfies $w \in L^{\infty}(0,T;X)$.  {Furthermore, the solution $w$ blows up in the norm $X$ at a time $0<T^{\ast}<+\ift$ if and only if we have
			\begin{equation}\label{eq:blowupL}
				\int_{0}^{T^{\ast}}{L(t)^{d-1}}%\bigg[n\bigg\|\frac{u^{r}(t)}{r}\bigg\|_{L^{\ift}(\bbR^{d})}+m\bigg\|\frac{u^{s}(t)}{s}\bigg\|_{L^{\ift}(\bbR^{d})}\bigg]
				dt=+\ift
			\end{equation} where $L(t)$ is defined in \eqref{eq:def-RSL}.}
	\end{prop}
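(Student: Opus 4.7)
The proof will reduce the scalar problem to Yudovich-type theory in $\bbR^{d}$ by unpacking the tensor vorticity, and then use transport of $w/(r^{n}s^{m})$ to close the blowup criterion. The main a priori input is a bound on $\bfu$ in terms of $L(t)$ and the conserved quantity $\nrm{w/(r^{n}s^{m})}_{L^{\ift}}$. Given compactly supported $w\in X$, the tensor $\boldsymbol{\omg}$ defined by \eqref{eq:vorttensca} satisfies the pointwise bound $|\omg^{i,j}(\bfx)|\le |w(r,s)|$ (since $|x_{i}|\le r$ and $|x_{j}|\le s$), is supported in $\{r\le R,\, s\le S\}\subset\bbR^{d}$ (a set of diameter $O(L)$ and volume $O(L^{d})$), and obeys $\nrm{w}_{L^{\ift}}\le R^{n}S^{m}\nrm{w/(r^{n}s^{m})}_{L^{\ift}}\le L^{n+m}\nrm{w/(r^{n}s^{m})}_{L^{\ift}}$. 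Combined with the elementary kernel estimate $|\bfv(\bfx)|\aleq \int |\bfx-\bfy|^{-(d-1)}|\boldsymbol{\omg}(\bfy)|\,d\bfy$ for the Cartesian Biot--Savart law and a standard near/far interpolation against the support volume, this yields the central estimate
\begin{equation*}
\nrm{\bfu}_{L^{\ift}(\Pi)}\le \nrm{\bfv}_{L^{\ift}(\bbR^{d})}\aleq L^{d-1}\,\nrm{w/(r^{n}s^{m})}_{L^{\ift}},
\end{equation*}
together with a log-Lipschitz modulus of continuity for $\bfv$, and hence for $\bfu$ on $\Pi$.

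With this velocity bound and log-Lipschitz regularity in hand, local existence proceeds by a Picard/Lagrangian fixed-point scheme for the scalar transport equation \eqref{eq:vorticityeq}: the log-Lipschitz field $\bfu$ generates a unique continuous flow in $\Pi$, $w/(r^{n}s^{m})$ is pushed forward along this flow, and the iteration closes on a short time interval via the above velocity bound. Uniqueness within $L^{\ift}(0,T;X)$ is reduced to Yudovich's classical uniqueness argument applied to $\boldsymbol{\omg}$ in $\bbR^{d}$, and the bi-rotational no-swirl ansatz \eqref{eq:birothighd} is preserved along the Euler flow by uniqueness combined with the rotational invariance of \eqref{eq:Eulereq}.

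For the blowup criterion, \eqref{eq:vorticityeq} gives the exact conservation $\nrm{w(t)/(r^{n}s^{m})}_{L^{\ift}}=\nrm{w_{0}/(r^{n}s^{m})}_{L^{\ift}}$, so $\nrm{w(t)}_{X}\aleq (1+L(t)^{n+m})\nrm{w_{0}/(r^{n}s^{m})}_{L^{\ift}}$, reducing continuation of the solution to control of $L(t)$. Since the support radii are advected by $\bfu$, we have $\dot R(t),\dot S(t)\le \nrm{\bfu}_{L^{\ift}}\aleq L(t)^{d-1}\nrm{w_{0}/(r^{n}s^{m})}_{L^{\ift}}$, i.e.\ the scalar differential inequality $\dot L(t)\aleq L(t)^{d-1}$. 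Integrating this inequality on $[0,T^{\ast})$ shows that $L$ blows up at $T^{\ast}$ if and only if $\int_{0}^{T^{\ast}} L(t)^{d-1}\,dt = +\ift$; combined with the $X$-norm control above and a standard continuation argument, this yields \eqref{eq:blowupL}. The main technical issue I anticipate is establishing the log-Lipschitz bound for $\bfu$ on $\Pi$ up to the axes $\rd\Pi$: the kinematic factors $x_{i}x_{j}/(rs)$ in \eqref{eq:vorttensca} must be shown not to spoil the Cartesian log-Lipschitz estimate when restricted to a two-dimensional slice, and one must verify that \eqref{eq:psicurl} yields bounded $\psi/s$ and $\psi/r$ up to $\rd\Pi$ (as foreshadowed by the remark following \eqref{eq:streamftn}); once this regularity is secured, the flow-map construction and continuation argument follow the Yudovich template adapted to the reduced two-dimensional geometry.
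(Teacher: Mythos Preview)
Your approach matches the paper's: reduce to the Cartesian Biot--Savart law in $\bbR^{d}$, obtain $\nrm{\bfu}_{L^{\ift}}\aleq L^{d-1}\nrm{w/(r^{n}s^{m})}_{L^{\ift}}$, derive $\dot L\aleq L^{d-1}$, and close local existence via a flow-map argument. Your control of the full $X$-norm by the trivial pointwise bounds $\|w/r^{n}\|_{L^{\ift}}\le S^{m}\|w/(r^{n}s^{m})\|_{L^{\ift}}$ and $\|w/s^{m}\|_{L^{\ift}}\le R^{n}\|w/(r^{n}s^{m})\|_{L^{\ift}}$ is in fact \emph{simpler} than what the paper does: it instead writes down separate evolution equations for $w/r^{n}$ and $w/s^{m}$, closes them by interpolating the right-hand sides against the conserved quantity, and integrates the resulting differential inequalities. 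Your shortcut is sharper and bypasses that machinery entirely.

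There is one small gap. In the direction ``$\int_{0}^{T^{\ast}} L^{d-1}=+\ift\Rightarrow$ blowup in $X$'', your inequality $\|w\|_{X}\aleq (1+L^{n+m})\nrm{w_{0}/(r^{n}s^{m})}_{L^{\ift}}$ is only an \emph{upper} bound, so it shows the solution cannot be continued (the local existence time depends on $L$) but does not by itself force $\|w(t)\|_{X}\to\ift$. The paper closes this with a second, cruder velocity estimate $\nrm{\bfu}_{L^{\ift}}\aleq L\,\nrm{w}_{L^{\ift}}$ (same kernel bound, but stopping at $\nrm{w}_{L^{\ift}}$ rather than passing to $\nrm{w/(r^{n}s^{m})}_{L^{\ift}}$), which yields $L(t)\le L(0)\exp\big(C\int_{0}^{t}\nrm{w(\tau)}_{L^{\ift}}\,d\tau\big)$ and hence $\nrm{w}_{L^{\ift}}\to\ift$ whenever $L\to\ift$. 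Adding this one line patches your argument.
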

	
	\begin{proof} %[Proof of Proposition \ref{prop:lwp}]
		%First, we define 
		\begin{comment}
			R(t)&:=\sup \lbrace r : (r,s)\in \supp w(t)\rbrace,\\
			S(t)&:=\sup \lbrace s : (r,s)\in \supp w(t)\rbrace,
		\end{comment}
		%and denote $L(t):=R(t)+S(t)$. 
		Note that from the evolution equation \eqref{eq:vorticityeq} of $w/(r^{n}s^{m})$%$r^{-n}s^{-m}w$
		, we have
		\begin{equation}
			\bigg\| \frac{w(t)}{r^{n}s^{m}}\bigg\|_{L^{\ift}(\bbR^{d})}=\bigg\| \frac{w_{0}}{r^{n}s^{m}}\bigg\|_{L^{\ift}(\bbR^{d})}.
		\end{equation}
		Using this, we can get the following upper bound of the vorticity maximum:
		\begin{equation}
			\begin{split}
				\nrm{w(t)}_{L^{\ift}(\bbR^{d})}&\leq R(t)^{n}S(t)^{m}\bigg\| \frac{w(t)}{r^{n}s^{m}}\bigg\|_{L^{\ift}(\bbR^{d})}=R(t)^{n}S(t)^{m}\bigg\| \frac{w_{0}}{r^{n}s^{m}}\bigg\|_{L^{\ift}(\bbR^{d})} \leq%\lesssim 
				L(t)^{n+m}\bigg\| \frac{w_{0}}{r^{n}s^{m}}\bigg\|_{L^{\ift}(\bbR^{d})}.
			\end{split}
		\end{equation}
		Here, we use the Biot--Savart law in Cartesian coordinates
		\begin{equation}
			v^{i}%u^{i}
			(t,x)=\sum_{j=1}^{d}\rd_{x_{j}}\Psi^{i,j}(t,x),\quad i=1, \cdots, d,\quad t\geq0,\quad x\in \bbR^{d},
		\end{equation}
		where
		\begin{equation}
			\Psi^{i,j}(t,x)=\lap^{-1}\omg^{i,j}(t,x)=c_{d}\int_{\bbR^{d}}\frac{1}{|x-y|^{d-2}}\omg^{i,j}(t,y)dy,\quad t\geq0,\quad x\in \bbR^{d}.
		\end{equation}
		Then note that we have
		\begin{equation}
			|\rd_{x_{j}}\Psi^{i,j}(t,x)|\lesssim_{d}%\leq c_{d}
			\int_{\bbR^{d}}\frac{1}{|x-y|^{d-1}}|\omg^{i,j}(t,y)|dy,\quad t\geq0, \quad x\in \bbR^{d}.
		\end{equation}
		Also, note that the vorticity $\omg^{i,j}(t,\cdot)$ is supported in the closed ball $\br{B_{L(t)}(0)}=\lbrace x\in \bbR^{d} : |x|\leq L(t)\rbrace$. %centered at the origin with radius $L(t)$. 
		Due to this, the maximum of the velocity is attained on the ball $\br{B_{L(t)}(0)}$. Then for any $|x|\leq L(t)$, we have
		\begin{equation}\label{eq:uLd-1}
			\begin{split}
				|\rd_{x_{j}}\Psi^{i,j}(t,x)|&\lesssim_{d}%\leq c_{d}
				\int_{\bbR^{d}}\frac{1}{|x-y|^{d-1}}|\omg^{i,j}(t,y)|dy=%c_{d}
				\int_{B_{L(t)}(0)}\frac{1}{|x-y|^{d-1}}|\omg^{i,j}(t,y)|dy\\
				&\leq %c_{d}
				\nrm{\omg^{i,j}(t)}_{L^{\ift}(B_{L(t)}(0))}\int_{B_{L(t)}(0)}\frac{1}{|x-y|^{d-1}}dy%\leq c_{d}\nrm{\omg^{i,j}(t)}_{L^{\ift}(B_{L(t)}(0))}\int_{B_{2L(t)}(x)}\frac{1}{|x-y|^{d-1}}dy\\
				\lesssim_{d} %c_{d}
				L\nrm{\omg^{i,j}(t)}_{L^{\ift}(B_{L(t)}(0))}\\
				&\leq %c_{d}
				L\nrm{w(t)}_{L^{\ift}(B_{L(t)}(0))}\leq %c_{d}
				LR^{n}S^{m}\bigg\| \frac{w(t)}{r^{n}s^{m}}\bigg\|_{L^{\ift}(B_{L(t)}(0))} \leq L^{d-1}\bigg\| \frac{w_{0}}{r^{n}s^{m}}\bigg\|_{L^{\ift}(\bbR^{d})}.
			\end{split}
		\end{equation}
		We used the relation $|x-y|\leq 2L(t)$ in the third inequality. From this, we obtain
		\begin{equation}\label{eq:dotLu}
			\dot{L}(t)\leq \nrm{u(t)}_{L^{\ift}(\bbR^{d})}\lesssim_{d,w_{0}}L(t)^{d-1}.%\bigg\| \frac{w_{0}}{r^{n}s^{m}}\bigg\|_{L^{\ift}(\bbR^{d})}.
		\end{equation}
		This gives us
		\begin{equation}
			L(t)\lesssim_{d,w_{0}} %[[(d-2)L(0)^{d-2}]^{-1}-t]^{-1/(d-2)}.
			\frac{1}{[\frac{1}{%(d-2)
					L(0)^{d-2}}-C_{d}t]^{1/(d-2)}},\quad 0\leq t<\frac{1}{C_{d}%(d-2)
				L(0)^{d-2}}.
		\end{equation}
		Taking $T=\frac{1}{C_{d}%(d-2)
			L(0)^{d-2}}$, we have
		\begin{equation}
			\nrm{w(t)}_{L^{\ift}(\bbR^{d})}\lesssim_{w_{0}}L(t)^{d-2}\lesssim_{d,w_{0}}\frac{1}{C_{d}(T%\frac{1}{(d-2)L(0)^{d-2}}
				-t)},\quad 0\leq t<T.%\frac{1}{(d-2)L(0)^{d-2}}.
		\end{equation}
		In addition, note that the terms $w/r^{n}$ and $w/s^{m}$ %$r^{-n}w$ and $s^{-m}w$ 
		satisfy the following evolution equations:
		\begin{align}
			\rd_{t}\frac{w}{r^{n}}+(u^{r}\rd_{r}+u^{s}\rd_{s})\frac{w}{r^{n}}&=m\frac{u^{s}}{s}\frac{w}{r^{n}},\label{eq:rnw}\\
			\rd_{t}\frac{w}{s^{m}}+(u^{r}\rd_{r}+u^{s}\rd_{s})\frac{w}{s^{m}}&=n\frac{u^{r}}{r}\frac{w}{s^{m}}.\label{eq:smw}
		\end{align}
		Here, we use the fact that we can interpolate the right-hand sides of equations \eqref{eq:rnw} and \eqref{eq:smw} as
		\begin{align}
			m\frac{u^{s}}{s}\frac{w}{r^{n}}&= mu^{r}\bigg(\frac{w}{r^{n}}\bigg)^{1-1/m}\bigg(\frac{w}{r^{n}s^{m}}\bigg)^{1/m}, \quad 
			n\frac{u^{r}}{r}\frac{w}{s^{m}} =nu^{s}\bigg(\frac{w}{s^{m}}\bigg)^{1-1/n}\bigg(\frac{w}{r^{n}s^{m}}\bigg)^{1/n}.
		\end{align}
		Using this, we have
		\begin{equation}\label{eq:r-nw}
			\begin{split}
				\frac{d}{dt}\bigg\|\frac{w}{r^{n}}\bigg\|_{L^{\ift}(\bbR^{d})}&\leq m\nrm{u^{s}}_{L^{\ift}(\bbR^{d})}\bigg\|\frac{w}{r^{n}}\bigg\|_{L^{\ift}(\bbR^{d})}^{1-1/m}\bigg\|\frac{w}{r^{n}s^{m}}\bigg\|_{L^{\ift}(\bbR^{d})}^{1/m} \lesssim mL^{d-1}\bigg\|\frac{w_{0}}{r^{n}s^{m}}\bigg\|_{L^{\ift}(\bbR^{d})}^{1+1/m}\bigg\|\frac{w}{r^{n}}\bigg\|_{L^{\ift}(\bbR^{d})}^{1-1/m},
			\end{split}
		\end{equation}
		and likewise,
		\begin{equation}\label{eq:s-mw}
			\begin{split}
				\frac{d}{dt}\bigg\|\frac{w}{s^{m}}\bigg\|_{L^{\ift}(\bbR^{d})}%&\leq m\nrm{u^{s}}_{L^{\ift}(\bbR^{d})}\bigg\|\frac{w}{r^{n}}\bigg\|_{L^{\ift}(\bbR^{d})}^{1-1/m}\bigg\|\frac{w}{r^{n}s^{m}}\bigg\|_{L^{\ift}(\bbR^{d})}^{1/m}\\
				&\lesssim nL^{d-1}\bigg\|\frac{w_{0}}{r^{n}s^{m}}\bigg\|_{L^{\ift}(\bbR^{d})}^{1+1/n}\bigg\|\frac{w}{s^{m}}\bigg\|_{L^{\ift}(\bbR^{d})}^{1-1/n}.
			\end{split}
		\end{equation}
		Solving the above differential inequalities, we get
		\begin{align}
			\bigg\|\frac{w(t)}{r^{n}}\bigg\|_{L^{\ift}(\bbR^{d})}&\lesssim_{d,w_{0}}\bigg(1+\int_{0}^{t}L(\tau)^{d-1}d\tau\bigg)^{m},\quad 0\leq t<T,\label{eq:r-nwsol}\\
			\bigg\|\frac{w(t)}{s^{m}}\bigg\|_{L^{\ift}(\bbR^{d})}&\lesssim_{d,w_{0}}\bigg(1+\int_{0}^{t}L(\tau)^{d-1}d\tau\bigg)^{n},\quad 0\leq t<T.\label{eq:s-mwsol}
		\end{align}
		This finishes the proof of a priori estimate of the solution $w$. From this, we can obtain the existence of a solution via standard smoothing process (\cite{MP,Danaxi}).
		
		\medskip
		
		To show %we will prove 
		the blowup criterion \eqref{eq:blowupL}, note that 
		%Indeed, 
		from the a priori estimate of $\nrm{w}_{L^{\ift}(\bbR^{d})}$, we can get the following differential inequality:%see that
		\begin{align}
			\frac{d}{dt}\nrm{w}_{L^{\ift}(\bbR^{d})}&\leq n\nrm{u^{r}}_{L^{\ift}(\bbR^{d})}\bigg\|\frac{w}{r}\bigg\|_{L^{\ift}(\bbR^{d})}+m\nrm{u^{s}}_{L^{\ift}(\bbR^{d})}\bigg\|\frac{w}{s}\bigg\|_{L^{\ift}(\bbR^{d})}\\
			&\leq n\nrm{u^{r}}_{L^{\ift}(\bbR^{d})}\bigg\|\frac{w}{r^{n}}\bigg\|_{L^{\ift}(\bbR^{d})}^{1/n}\nrm{w}_{L^{\ift}(\bbR^{d})}^{1-1/n}+m\nrm{u^{s}}_{L^{\ift}(\bbR^{d})}\bigg\|\frac{w}{s^{m}}\bigg\|_{L^{\ift}(\bbR^{d})}^{1/m}\nrm{w}_{L^{\ift}(\bbR^{d})}^{1-1/m}\\
			& {\lesssim_{d,w_{0}} nL^{d-1}\bigg\|\frac{w}{r^{n}}\bigg\|_{L^{\ift}(\bbR^{d})}^{1/n}\nrm{w}_{L^{\ift}(\bbR^{d})}^{1-1/n}+mL^{d-1}\bigg\|\frac{w}{s^{m}}\bigg\|_{L^{\ift}(\bbR^{d})}^{1/m}\nrm{w}_{L^{\ift}(\bbR^{d})}^{1-1/m}}.\label{eq:ddtwift}
			%&\Blue{\lesssim_{d,w_{0}}nL^{d-1}\bigg(1+\int_{0}^{t}L(\tau)^{d-1}d\tau\bigg)^{m/n}\nrm{w}_{L^{\ift}(\bbR^{d})}^{1-1/n}}\\
			%&\Blue{\qquad+mL^{d-1}\bigg(1+\int_{0}^{t}L(\tau)^{d-1}d\tau\bigg)^{n/m}\nrm{w}_{L^{\ift}(\bbR^{d})}^{1-1/m}.}
		\end{align}
		First, assume that we have \eqref{eq:blowupL} for some $0<T^{\ast}<+\ift$. Then this implies that we have
		\begin{equation}\label{eq:Ltift}
			\lim\limits_{t \nearrow T^{\ast}}L(t)=+\ift.
		\end{equation}
		%Then from the inequality \eqref{eq:dotLu}, we get
		%$$ \int_{0}^{T^{\ast}}\nrm{u(t)}_{L^{\ift}(\bbR^{d})}dt=+\ift, $$
		%which gives us
		%$$ \lim\limits_{t \nearrow T^{\ast}}\nrm{u(t)}_{L^{\ift}(\bbR^{d})}=+\ift. $$
		Then we use the relation
		$$ \dot{L}(t)\leq\nrm{u(t)}_{L^{\ift}(\bbR^{d})}\lesssim_{d}%\leq 
		L(t)\nrm{w(t)}_{L^{\ift}(\bbR^{d})}, $$
		which is obtained from \eqref{eq:uLd-1}. From this, we have
		$$ \frac{d}{dt}(\ln L(t))\lesssim_{d}%\leq 
		\nrm{w(t)}_{L^{\ift}(\bbR^{d})}, $$
		which gives us
		\begin{equation}\label{eq:Ltwtrel}
			L(t)\lesssim_{d}%\leq 
			L(0)\exp\bigg(\int_{0}^{t}\nrm{w(\tau)}_{L^{\ift}(\bbR^{d})}d\tau\bigg).
		\end{equation}
		Using this, \eqref{eq:Ltift} implies that we have
		$$ \int_{0}^{T^{\ast}}\nrm{w(t)}_{L^{\ift}(\bbR^{d})}dt=+\ift. $$
		Thus, we get
		\begin{equation}\label{eq:blowupTast}
			\lim\limits_{t \nearrow T^{\ast}}\nrm{w(t)}_{L^{\ift}(\bbR^{d})}=+\ift.
		\end{equation}
		%First, assume that we have \eqref{eq:blowupL} for some $0<T^{\ast}<+\ift$. Then from the differential inequalities \eqref{eq:r-nw} and \eqref{eq:s-mw}, the norms $\nrm{r^{-n}w}_{L^{\ift}(\bbR^{d})}$ and $\nrm{r^{-n}w}_{L^{\ift}(\bbR^{d})}$ blow up at time $t=T^{\ast}$. Using this fact and the assumption \eqref{eq:blowupL} on the differential inequality \eqref{eq:ddtwift}, we get the blowup of $\nrm{w}_{L^{\ift}(\bbR^{d})}$ at time $t=T^{\ast}$.\\ 
		Now to prove the other direction, assume that for any $0<T^{\ast}<+\ift$, we have
		\begin{equation}\label{eq:blowupnegation}
			\int_{0}^{T^{\ast}}L(t)^{d-1}dt%\nrm{w(t)}_{L^{\ift}(\bbR^{d})}dt
			<+\ift.
		\end{equation}
		Then this and the inequalities \eqref{eq:r-nwsol} and \eqref{eq:s-mwsol}, which are obtained by solving the differential inequalities \eqref{eq:r-nw} and \eqref{eq:s-mw}, imply that the norms $\nrm{w/r^{n}%r^{-n}w
		}_{L^{\ift}(\bbR^{d})}$ and $\nrm{w/s^{m}%s^{-m}w
		}_{L^{\ift}(\bbR^{d})}$ are bounded at $t=T^{\ast}$. Lastly, from the boundedness of these norms, the assumption \eqref{eq:blowupnegation}, and the differential inequality \eqref{eq:ddtwift}, we have that the norm $\nrm{w}_{L^{\ift}(\bbR^{d})}$ is bounded at $t=T^{\ast}$.
	\end{proof}
	
	\begin{proof}[Proof of Theorem \ref{thm:0}]
		We let the initial data $w_{0}$ as in \eqref{eq:patch}. Then by Proposition \ref{prop:lwp}, we have the local-in-time regularity of the solution $w$ of the form \eqref{eq:patchsol}. Additionally, \eqref{eq:blowupL} implies
		\begin{equation}\label{eq:Lblowup}
			\sup_{t\in[0,T^{\ast})}L(t)=+\ift.
		\end{equation} 
			On the other hand, if we have \eqref{eq:Lblowup} for some $0<T^{\ast}<+\ift$, then we have \eqref{eq:Ltift}. Then we can repeat the same argument from the proof of Proposition \ref{prop:lwp} to obtain the blowup \eqref{eq:blowupTast} at time $t=T^{\ast}$. 
	\end{proof}
	
	%\section{Fluid impulses in $r$ and $s$-directions}
	
	%In this section, 

	\section{Support growth} \label{sec:growth}
	In this section, we present the proof of Theorem \ref{thm:1}. Throughout this section, we denote $T_{\text{max}}\in (0,+\ift]$ as the maximal time of existence of the solution $w$ of \eqref{eq:Eulereq}. 
	First, we define impulses of the solution $w$ in $r$ and 
	$s$-direction, respectively. % Then, we present their alternative forms using integration by parts. 
	
	\begin{definition}\label{def:imp}
		For any $t\in [0,T_{\text{max}})$ the $r$ and $s$-impulses of $w$ %the solution $w$ of \eqref{eq:Eulereq} 
		are defined as
		\begin{equation}\label{eq:imp}
			P^{r}(t):=\frac{1}{n+1}\iint_{\Pi}r^{n+1}w(t,r,s)drds,\quad P^{s}(t):=\frac{1}{m+1}\iint_{\Pi}s^{m+1}w(t,r,s)drds.
		\end{equation}
	\end{definition}
	The following lemma says that the time derivatives of the impulses above are positive %non-negative 
	and negative%non-positive
	, respectively.
	\begin{lem}\label{lem:Prtdot}
		For any $t\in [0,T_{\text{max}})$, we have
		\begin{align}
			\dot{P}^{r}(t)&=m\iint_{\Pi}\frac{r^{n}}{s}[u^{s}(t,r,s)]^{2}drds%+\frac{1}{2}\int_{\lbrace s=0\rbrace}r^{n}[u^{s}(r,0)]^{2}dr
			+\frac{1}{2}\int_{\lbrace s=0\rbrace}r^{n}[u^{r}(t,r,0)]^{2}dr,\label{eq:Prtdot}\\
			\dot{P}^{s}(t)&=-n\iint_{\Pi}\frac{s^{m}}{r}[u^{r}(t,r,s)]^{2}drds%+\frac{1}{2}\int_{\lbrace s=0\rbrace}r^{n}[u^{s}(r,0)]^{2}dr
			-\frac{1}{2}\int_{\lbrace r=0\rbrace}s^{m}[u^{s}(t,0,s)]^{2}ds.\label{eq:Pstdot}
		\end{align}
	\end{lem}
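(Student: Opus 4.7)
The plan is to first rewrite \eqref{eq:vorteq} in divergence form. Indeed, using \eqref{eq:divergencefree} to express $\rd_{r}u^{r}+\rd_{s}u^{s} = -(n/r)u^{r}-(m/s)u^{s}$, one sees that the transport-with-source equation \eqref{eq:vorteq} is equivalent to
\begin{equation}
	\rd_{t}w + \rd_{r}(u^{r}w) + \rd_{s}(u^{s}w) = 0.
\end{equation}
Differentiating $P^{r}$ in time and substituting this, I would integrate by parts twice. The $r$-boundary terms vanish because $r^{n+1}|_{r=0}=0$ and $w(t,\cdot)$ is compactly supported; the $s$-boundary terms vanish because $m\ge 1$ forces $w(t,r,0)=0$. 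The result is the clean identity
\begin{equation}
	\dot{P}^{r}(t) = \iint_{\Pi} r^{n} u^{r} w \, dr\, ds.
\end{equation}

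Next I would substitute $w = \rd_{r}u^{s}-\rd_{s}u^{r}$ and treat the two resulting pieces separately. The $\rd_{s}u^{r}$ piece is immediate: $-\iint r^{n}u^{r}\rd_{s}u^{r}\,dr\,ds = -\tfrac12\iint r^{n}\rd_{s}(u^{r})^{2}\,dr\,ds$, and integration by parts in $s$ picks up only the $s=0$ boundary, yielding exactly $\tfrac12\int r^{n}[u^{r}(r,0)]^{2}dr$ (the $s=\infty$ contribution vanishes by compact support of $w$, hence eventual vanishing of $u^{r}$ at infinity).

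For the remaining piece $\iint r^{n}u^{r}\rd_{r}u^{s}\,dr\,ds$, I would integrate by parts in $r$ (no boundary terms, since $r^{n}$ vanishes at $r=0$), obtaining $-\iint r^{n}\bigl(\rd_{r}u^{r}+(n/r)u^{r}\bigr)u^{s}\,dr\,ds$. The divergence-free identity \eqref{eq:divergencefree} rewrites the bracket as $-\rd_{s}u^{s}-(m/s)u^{s}$, so this becomes
\begin{equation}
	\iint_{\Pi} r^{n} u^{s} \rd_{s}u^{s}\,dr\,ds + m\iint_{\Pi}\frac{r^{n}}{s}(u^{s})^{2}\,dr\,ds.
\end{equation}
The second term is precisely the desired main term in \eqref{eq:Prtdot}. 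The first term equals $\tfrac12\int r^{n}\bigl[(u^{s})^{2}\bigr]_{s=0}^{\infty}dr$, which would \emph{a priori} contribute a spurious boundary integral at $s=0$. The crucial observation that removes it is that $u^{s}(r,0)=0$: from \eqref{eq:psicurl}, $u^{s}=(n/r)\psi+\rd_{r}\psi$, and since $\psi\equiv 0$ on $\{s=0\}$ we have $\psi(r,0)=0$ and $\rd_{r}\psi(r,0)=\rd_{r}[\psi(r,0)]=0$. Combining the two pieces gives \eqref{eq:Prtdot}; the identity \eqref{eq:Pstdot} follows by a symmetric argument using $u^{r}(0,s)=0$, with the overall sign flipping because $w=\rd_{r}u^{s}-\rd_{s}u^{r}$ is antisymmetric in the roles of the two variables.

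The only genuine obstacle is bookkeeping of boundary terms and ensuring the integrability of $u^{s}/s$ (resp.\ $u^{r}/r$) near the axis — this follows from the remark after \eqref{eq:psicurl} that $\psi/s$ and $\psi/r$ are non-singular on $\rd\Pi$ together with the fact that $w$ is compactly supported, so every integration by parts is justified.
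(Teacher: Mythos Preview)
Your proof is correct and follows essentially the same route as the paper: both arrive at the intermediate identity $\dot P^{r}=\iint_{\Pi} r^{n}u^{r}w\,dr\,ds$, then substitute $w=\rd_{r}u^{s}-\rd_{s}u^{r}$, integrate by parts, and use \eqref{eq:divergencefree} together with $u^{s}(r,0)=0$ to finish. The only cosmetic difference is in the first step --- you pass through the divergence form $\rd_{t}w+\rd_{r}(u^{r}w)+\rd_{s}(u^{s}w)=0$, whereas the paper uses the transport equation for $w/(r^{n}s^{m})$ --- but both arguments are one integration by parts away from the same identity.
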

	This implies that the $r$-impulse $P^{r}$ is non-decreasing and the $s$-impulse $P^{s}$ is non-increasing in time.
	
	\begin{proof}
		Using the dynamics of $w$, the time derivative of $P^{r}$ can be written as
		\begin{equation}
			\begin{split}
				\dot{P}^{r}(t)&=\frac{1}{n+1}\frac{d}{dt}\iint_{\Pi}(r^{2n+1}s^{m})\frac{w%(t,r,s)
				}{r^{n}s^{m}}drds =-\frac{1}{n+1}\iint_{\Pi}(r^{2n+1}s^{m})(u^{r}\rd_{r}+u^{s}\rd_{s})\bigg(\frac{w%(t,r,s)
				}{r^{n}s^{m}}\bigg)drds.%\\
				%&=-\frac{1}{n+1}\bigg[0-0%\int_{\lbrace r=0\rbrace}r^{n+1}u^{r}(t,0,s)w(t,0,s)ds
				%-(2n+1)\iint_{\Pi}r^{n}u^{r}wdrds-\iint_{\Pi}r^{n+1}\rd_{r}u^{r}wdrds\\
				%&\qquad\qquad\quad+0-0-m\iint_{\Pi}\frac{r^{n+1}}{s}u^{s}wdrds-\iint_{\Pi}r^{n+1}\rd_{s}u^{s}wdrds\bigg]\\
				%&=\iint_{\Pi}r^{n}u^{r}wdrds.
			\end{split}
		\end{equation}
		Then using integration by parts, we have
		\begin{equation}
			\begin{split}
				\dot{P}^{r}(t)%&=\frac{1}{n+1}\frac{d}{dt}\iint_{\Pi}(r^{2n+1}s^{m})\frac{w%(t,r,s)
				%}{r^{n}s^{m}}drds\\
			%&=-\frac{1}{n+1}\iint_{\Pi}(r^{2n+1}s^{m})(u^{r}\rd_{r}+u^{s}\rd_{s})\bigg(\frac{w%(t,r,s)
			%}{r^{n}s^{m}}\bigg)drds\\
		&=-\frac{1}{n+1}\bigg[0-0%\int_{\lbrace r=0\rbrace}r^{n+1}u^{r}(t,0,s)w(t,0,s)ds
		-(2n+1)\iint_{\Pi}r^{n}u^{r}wdrds-\iint_{\Pi}r^{n+1}\rd_{r}u^{r}wdrds\\
		&\qquad\qquad\quad+0-0-m\iint_{\Pi}\frac{r^{n+1}}{s}u^{s}wdrds-\iint_{\Pi}r^{n+1}\rd_{s}u^{s}wdrds\bigg] =\iint_{\Pi}r^{n}u^{r}wdrds.
	\end{split}
\end{equation}
The divergence-free condition \eqref{eq:divergencefree} was used in the second equality. Now we plug in $w=\rd_{r}u^{s}-\rd_{s}u^{r}$ and use integration by parts to get
\begin{equation}
	\begin{split}
		\dot{P}^{r}(t)&=\iint_{\Pi}r^{n}u^{r}(\rd_{r}u^{s}-\rd_{s}u^{r})drds = -\iint_{\Pi}r^{n}\bigg[\frac{n}{r}u^{r}+\rd_{r}u^{r}\bigg]
		u^{s}drds
		%-\iint_{\Pi}r^{n}(\rd_{r}u^{r})u^{s}drds
		-\iint_{\Pi}r^{n}\rd_{s}\bigg[\frac{(u^{r})^{2}}{2}\bigg]drds\\
		&=\iint_{\Pi}r^{n}\bigg[\frac{m}{s}u^{s}+\rd_{s}u^{s}\bigg]u^{s}drds-0+\int_{\lbrace s=0\rbrace}r^{n}\frac{(u^{r})^{2}}{2}dr+0.\\
	\end{split}
\end{equation}
In the third equality, we used the divergence-free condition \eqref{eq:divergencefree} once more. Finally, we use integration by parts one more time to obtain
\begin{equation}
	\begin{split}
		\dot{P}^{r}(t)%&=\iint_{\Pi}r^{n}u^{r}(\rd_{r}u^{s}-\rd_{s}u^{r})drds\\
		%&=0-0-\iint_{\Pi}r^{n}\bigg[\frac{n}{r}u^{r}+\rd_{r}u^{r}\bigg]
		%u^{s}drds
		%-\iint_{\Pi}r^{n}(\rd_{r}u^{r})u^{s}drds
		%-\iint_{\Pi}r^{n}\rd_{s}\bigg[\frac{(u^{r})^{2}}{2}\bigg]drds\\
		%&=\iint_{\Pi}r^{n}\bigg[\frac{m}{s}u^{s}+\rd_{s}u^{s}\bigg]u^{s}drds-0+\int_{\lbrace s=0\rbrace}r^{n}\frac{(u^{r})^{2}}{2}dr+0\\
		&=m\iint_{\Pi}\frac{r^{n}}{s}(u^{s})^{2}drds+\iint_{\Pi}r^{n}\rd_{s}\bigg[\frac{(u^{s})^{2}}{2}\bigg]drds+\int_{\lbrace s=0\rbrace}r^{n}\frac{(u^{r})^{2}}{2}dr \\
		&=m\iint_{\Pi}\frac{r^{n}}{s}(u^{s})^{2}drds+\int_{\lbrace s=0\rbrace}r^{n}\frac{(u^{r})^{2}}{2}dr.
	\end{split}
\end{equation}
The time derivative of $P^{s}$ can be computed in the same way.
\end{proof}

\begin{comment}\label{lem:psinonneg}
For any $ r\geq0 $, we have
\begin{equation}\label{eq:psinonneg}
	\psi(r,0)\geq0ㄷㄷ
\end{equation}
\end{comment}

\begin{comment}
\begin{equation}
	\begin{split}
		\psi(r,0)&=c_{d}\iint_{\Pi}\iint_{[0,\pi]^{n+m}}\frac{\br{r}\br{s}\cos\br{\tht}_{1}\cdots\cos\br{\tht}_{n}\cos\br{\phi}_{1}\cdots\cos\br{\phi}_{m}%y_{i}y_{j}
		}{(r^{2}-2r\br{r}\cos\br{\tht}_{1}\cdots\cos\br{\tht}_{n}+\br{r}^{2}%+s^{2}-2s\br{s}\cos\br{\phi}_{1}\cdots\cos\br{\phi}_{m}
			+\br{s}^{2})^{d/2-1}}\cdot\\
		&\qquad\qquad\qquad\qquad\qquad\qquad\qquad\qquad\qquad\qquad\qquad\qquad w(\br{r},\br{s})d\br{\tht}_{1}\cdots d\br{\tht}_{n}d\br{\phi}_{1}\cdots d\br{\phi}_{m}d\br{r}d\br{s}\\
		&=c_{d}\iint_{\Pi}\iint_{[0,\pi/2]^{n+m}}\br{r}\br{s}\cos\br{\tht}_{1}\cdots\cos\br{\tht}_{n}\cos\br{\phi}_{1}\cdots\cos\br{\phi}_{m}\cdot\\
		&\qquad\qquad\qquad\qquad\bigg[\frac{1}{(r^{2}-2r\br{r}\cos\br{\tht}_{1}\cdots\cos\br{\tht}_{n}+\br{r}^{2}+\br{s}^{2})^{d/2-1}}\\
		&\qquad\qquad\qquad\qquad\qquad-\frac{1}{(r^{2}+2r\br{r}\cos\br{\tht}_{1}\cdots\cos\br{\tht}_{n}+\br{r}^{2}+\br{s}^{2})^{d/2-1}}\bigg]\cdot\\
		&\qquad\qquad\qquad\qquad\qquad\qquad\qquad\qquad\qquad\qquad\qquad\qquad w(\br{r},\br{s})d\br{\tht}_{1}\cdots d\br{\tht}_{n}d\br{\phi}_{1}\cdots d\br{\phi}_{m}d\br{r}d\br{s}\\
		&\geq 0.
	\end{split}
\end{equation}
\end{comment}

\begin{lem}\label{lem:lowerbd}
For any $r\geq0$, $\rd_{s}\psi(r,0)$ has the lower bound
\begin{equation}\label{eq:rdspsir0bd}
	\rd_{s}\psi(r,0)\geq c_{d}r\iint_{\Pi}\frac{\br{r}^{n+1}\br{s}^{m+1}}{[(r+\br{r})^{2}+\br{s}^{2}]^{d/2+1}}w(\br{r},\br{s})d\br{r}d\br{s}.
\end{equation}
\end{lem}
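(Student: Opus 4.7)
The plan is to start from the explicit Biot--Savart representation \eqref{eq:streamftn}, differentiate in $s$ to obtain a representation for $\rd_s\psi(r,0)$, and then estimate the resulting angular kernel from below by reducing it to a difference of two positive kernels and applying the mean value theorem.

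First, since $\psi$ vanishes on $\rd\Pi$, we have $\rd_s\psi(r,0)=\lim_{s\to 0}\psi(r,s)/s$. Differentiating \eqref{eq:streamftn} in $s$ and setting $s=0$, and writing
$$P_\theta = \cos\bar\tht_1\cdots\cos\bar\tht_n,\qquad P_\phi = \cos\bar\phi_1\cdots\cos\bar\phi_m,\qquad \calA(r,\bar r,\bar s)=r^2+\bar r^2+\bar s^2,$$
I expect to obtain, with some $c_d>0$,
$$\rd_s\psi(r,0) = c_d\iint_\Pi\iint_{[0,\pi]^{n+m}} \frac{\bar r^n\bar s^{m+1} P_\theta P_\phi^2}{(\calA-2r\bar r P_\theta)^{d/2}}\,w(\bar r,\bar s)\,d\bar\tht\,d\bar\phi\,d\bar r\,d\bar s.$$
The $\bar\phi$ variables then factor out, and I would use $\int_{[0,\pi]^m}P_\phi^2\,d\bar\phi = (\pi/2)^m$ to absorb them into the constant.

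Next I would exploit the symmetry of the $\bar\tht$ domain. For each $S\subseteq\{1,\ldots,n\}$, the change of variables $\bar\tht_l\mapsto\pi-\bar\tht_l$ for $l\in S$ maps $[0,\pi/2]^n$ to $[0,\pi]^n$ and multiplies $P_\theta$ by $(-1)^{|S|}$. Summing $\sum_S(-1)^{|S|}g((-1)^{|S|}p)=2^{n-1}[g(p)-g(-p)]$ (valid for $n\ge 1$), applied with $g(x)=1/(\calA-2r\bar r x)^{d/2}$, yields
$$\int_{[0,\pi]^n}\frac{P_\theta\,d\bar\tht}{(\calA-2r\bar r P_\theta)^{d/2}} = 2^{n-1}\int_{[0,\pi/2]^n} p\left[\frac{1}{(\calA-2r\bar r p)^{d/2}}-\frac{1}{(\calA+2r\bar r p)^{d/2}}\right]d\bar\tht,$$
where $p=\prod_l\cos\bar\tht_l\in[0,1]$ on the restricted domain. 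Crucially, the bracketed quantity is manifestly non-negative, and the factor $p$ out front is non-negative, so I now have a purely positive integrand to bound from below.

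The final step is a one-variable calculus bound: for $0\le X<\calA$,
$$\frac{1}{(\calA-X)^{d/2}}-\frac{1}{(\calA+X)^{d/2}}=\frac{d}{2}\int_{\calA-X}^{\calA+X}\frac{dt}{t^{d/2+1}}\ge \frac{dX}{(\calA+X)^{d/2+1}}.$$
Apply this with $X=2r\bar r p$ and note $\calA+2r\bar r p\le (r+\bar r)^2+\bar s^2$ since $p\le 1$. Finally, $\int_{[0,\pi/2]^n}p^2\,d\bar\tht = (\pi/4)^n$ absorbs the last angular integration, and collecting all dimensional constants gives
$$\rd_s\psi(r,0)\ge c_d\,r\iint_\Pi\frac{\bar r^{n+1}\bar s^{m+1}}{[(r+\bar r)^2+\bar s^2]^{d/2+1}}\,w(\bar r,\bar s)\,d\bar r\,d\bar s,$$
which is \eqref{eq:rdspsir0bd}. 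The main technical obstacle is the angular folding in the $\bar\tht$ variables: one must correctly account for the signs of $P_\theta$ on the various octants of $[0,\pi]^n$, and package the sum into the clean difference-of-kernels form on the single octant $[0,\pi/2]^n$ so that both the mean value estimate and the bound $\calA+2r\bar r p\le (r+\bar r)^2+\bar s^2$ apply with the correct sign.
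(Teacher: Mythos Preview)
Your proposal is correct and follows essentially the same route as the paper: represent $\rd_s\psi(r,0)$ via the Biot--Savart kernel, fold the $\bar\tht$-integration to $[0,\pi/2]^n$ so that the integrand becomes a manifestly nonnegative difference of kernels, bound that difference below by $B'/[(r+\bar r)^2+\bar s^2]^{d/2+1}$, and integrate out the angular variables. The only cosmetic difference is in how the kernel difference is bounded: the paper uses the algebraic telescoping $(A'+B')^{d}-(A'-B')^{d}=2B'\sum_{k}(A'+B')^{d-1-k}(A'-B')^{k}$, whereas you use the equivalent integral form $\frac{1}{(\calA-X)^{d/2}}-\frac{1}{(\calA+X)^{d/2}}=\frac{d}{2}\int_{\calA-X}^{\calA+X}t^{-d/2-1}\,dt$; both yield the same lower bound up to a dimensional constant.
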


\begin{proof}
The integral form of $\rd_{s}\psi$ when $s=0$ is computed as
\begin{equation}
	\begin{split}
		&\rd_{s}\psi(r,0)=c_{d}\iint_{\Pi}\int_{0}^{\pi}\int_{0}^{\pi}\frac{\br{r}^{n}\br{s}^{m+1}\sin^{n-1}\br{\tht}_{1}\cos\br{\tht}_{1}\sin^{m-1}\br{\phi}_{1}\cos^{2}\br{\phi}_{1}}{(r^{2}+\br{r}^{2}-2r\br{r}\cos\br{\tht}_{1}+\br{s}^{2})^{d/2-1}}w(\br{r},\br{s})d\br{\tht}_{1}d\br{\phi}_{1}d\br{r}d\br{s}\\
		&\quad=c_{d}\iint_{\Pi}\int_{0}^{\pi/2}\int_{0}^{\pi/2}\br{r}^{n}\br{s}^{m+1}\sin^{n-1}\br{\tht}_{1}\cos\br{\tht}_{1}\sin^{m-1}\br{\phi}_{1}\cos^{2}\br{\phi}_{1}\\
		&\qquad\cdot\bigg[\frac{1}{(r^{2}+\br{r}^{2}-2r\br{r}\cos\br{\tht}_{1}+\br{s}^{2})^{d/2-1}}-\frac{1}{(r^{2}+\br{r}^{2}+2r\br{r}\cos\br{\tht}_{1}+\br{s}^{2})^{d/2-1}}\bigg]w(\br{r},\br{s})d\br{\tht}_{1}d\br{\phi}_{1}d\br{r}d\br{s}.
	\end{split}
\end{equation}

For simplicity, we denote $A':=r^{2}+\br{r}^{2}+\br{s}^{2}$ and $B'=2r\br{r}\cos\br{\tht}_{1}$. %$B':=2r\br{r}\cos\br{\tht}_{1}\Red{\cdots\cos\br{\tht}_{n}}$. 
\begin{comment}
	$B'=2r\br{r}\cos\br{\tht}_{1}$.
\end{comment}
Then using the inequality
\begin{equation}
\begin{split}
	&\frac{1}{(A'-B')^{d/2}}-\frac{1}{(A'+B')^{d/2}}\\%=\frac{(A+B)^{d/2}-(A+B)^{d/2}}{(A-B)^{d/2}(A+B)^{d/2}}\\
	%&\qquad=\frac{(A+B)^{d}-(A+B)^{d}}{(A+B)^{d/2}(A+B)^{d/2}[(A-B)^{d/2}+(A+B)^{d/2}]}\\
	&\qquad=\frac{2B'[(A'+B')^{d-1}+(A'+B')^{d-2}(A'-B')+\cdots+(A'+B')(A'-B')^{d-2}+(A'-B')^{d-1}]}{(A'-B')^{d/2}(A'+B')^{d/2}[(A'-B')^{d/2}+(A'+B')^{d/2}]}\\
	&\qquad\geq \frac{B'}{(A'-B')^{d/2}(A'+B')}\geq\frac{B'}{(A'+B')^{d/2+1}} \geq\frac{B'}{[(r+\br{r})^{2}+\br{s}^{2}]^{d/2+1}},
\end{split}
\end{equation}
we have the lower bound in \eqref{eq:rdspsir0bd}:
\begin{equation}
	\begin{split}
		\rd_{s}\psi(r,0)&\gtrsim_{d} r\iint_{\Pi}\int_{0}^{\pi/2}\int_{0}^{\pi/2}\frac{\br{r}^{n+1}\br{s}^{m+1}\sin^{n-1}\br{\tht}_{1}\cos^{2}\br{\tht}_{1}\sin^{m-1}\br{\phi}_{1}\cos^{2}\br{\phi}_{1}}{[(r+\br{r})^{2}+\br{s}^{2}]^{d/2+1}}w(\br{r},\br{s})d\br{\tht}_{1}d\br{\phi}_{1}d\br{r}d\br{s}\\
		&\gtrsim_{d}r\iint_{\Pi}\frac{\br{r}^{n+1}\br{s}^{m+1}}{[(r+\br{r})^{2}+\br{s}^{2}]^{d/2+1}}w(\br{r},\br{s})d\br{r}d\br{s}.
	\end{split}
\end{equation}
\begin{comment}
\begin{split}
	\rd_{s}\psi(r,0)&\gtrsim_{d}%\geq c_{d}
	r\iint_{\Pi}\iint_{[0,\pi/2]^{n+m}}\frac{\br{r}^{n+1}\br{s}^{m+1}\Red{[\cos\br{\tht}_{1}\cdots\cos\br{\tht}_{n}
		\cos\br{\phi}_{1}\cdots\cos\br{\phi}_{m}
		]^{2}}}{[(r+\br{r})^{2}+\br{s}^{2}]^{d/2+1}}\cdot\\
	&\qquad\qquad\qquad\qquad\qquad\qquad\qquad\qquad\qquad\qquad\qquad w(\br{r},\br{s})d\br{\tht}_{1}\cdots d\br{\tht}_{n}d\br{\phi}_{1}\cdots d\br{\phi}_{m}d\br{r}d\br{s}\\
	&\gtrsim_{d}%\geq c_{d}
	r\iint_{\Pi}\frac{\br{r}^{n+1}\br{s}^{m+1}}{[(r+\br{r})^{2}+\br{s}^{2}]^{d/2+1}}w(\br{r},\br{s})d\br{r}d\br{s}.
\end{split}
\end{comment}
\begin{comment}
	\begin{equation}
		\begin{split}
			\rd_{s}\psi(r,0)&\gtrsim_{d} r\iint_{\Pi}\int_{0}^{\pi/2}\int_{0}^{\pi/2}\frac{\br{r}^{n+1}\br{s}^{m+1}\cos^{2}\br{\tht}_{1}\sin^{n-1}\br{\tht}_{1}\cos^{2}\br{\phi}_{1}\sin^{m-1}\br{\phi}_{1}}{[(r+\br{r})^{2}+\br{s}^{2}]^{d/2+1}}w(\br{r},\br{s})d\br{\tht}_{1}d\br{\phi}_{1}d\br{r}d\br{s}\\
			&\gtrsim_{d}r\iint_{\Pi}\frac{\br{r}^{n+1}\br{s}^{m+1}}{[(r+\br{r})^{2}+\br{s}^{2}]^{d/2+1}}w(\br{r},\br{s})d\br{r}d\br{s}.
		\end{split}
	\end{equation}
\end{comment}
This finishes the proof. 
\end{proof}

\begin{proof}[Proof of Theorem \ref{thm:1}]
	{Since the conclusion \eqref{eq:patchstretching} is immediate for the case $T_{\text{max}}<+\ift$ due to the blowup criterion \eqref{eq:blowupcrit} from Theorem \ref{thm:0}, we assume that we have $T_{\text{max}}=+\ift$.} 
	
	Suppose that there exists $0<M<\ift$ such that we have
	\begin{equation}\label{eq:suppbdd}
		\sup_{t\geq 0%t\in [0,T_{\text{max}})
		} L(t)%\supp_{r}(\Omg_{t})%\sup \lbrace r : (r,s) \in \supp w(t)\rbrace 
		\leq M.
	\end{equation}
	{Then since $w(t,\cdot)$ is supported in the region $\lbrace |x|\leq L(t)\rbrace$, we have for any $t\geq 0$, %$0\leq t<T_{\text{max}}$,
		\begin{equation}\label{eq:Prtbt}
			\begin{split}
				P^{r}(t)&\leq \frac{L(t)^{n+1}}{n+1}\iint_{\Pi}w(t,r,s)drds=\frac{L(t)^{n+1}}{n+1}\iint_{\Pi}w_{0}(r,s)drds \leq \frac{M^{n+1}}{n+1}\iint_{\Pi}w_{0}(r,s)drds.
			\end{split}
		\end{equation}
		For convenience, let us denote $\bt:=\frac{M^{n+1}}{n+1}\iint_{\Pi}w_{0}(r,s)drds>0$. 
		%First, we consider the case $T_{\text{max}}=+\ift$. 
	}
	%Then we make the following claim:%\textbf{Claim}:
	\medskip
	
	%\textbf{Claim 1.} There exists $\veps>0$ such that %for any $ {T>0}%T'\in (0,T_{\text{max}})
	%$, we have $ {t\geq T}%t\in [T',T_{\text{max}})
	%$ but $\dot{P}^{r}(t)>\veps$.
	\medskip
	
	%Then we claim that 
	%, i.e., 
	%In other words, %That is, 
	%$\dot{P}^{r}(t)$ does not converge to $0$ as $t\to {+\ift}%T_{\text{max}}
	%$. 
	%{Once \textbf{Claim 1} is shown, then taking $T:=(2\bt)/\veps%2\veps^{-1} %M^{2}\nrm{%r^{-n}s^{-m}
			%	w_{0}/(r^{n}s^{m})}_{L^{1}(\bbR^{d})}%/\veps
		%$, we have
		%}
	\begin{comment}
		\begin{split}
			2M^{2}\nrm{r^{-n}s^{-m}w_{0}}_{L^{1}(\bbR^{d})}&\leq P^{r}(0)+\veps T_{0}\\
			&< P^{r}(T_{0})\leq M^{2}\nrm{r^{-n}s^{-m}w(T_{0})}_{L^{1}(\bbR^{d})}\\
			&=M^{2}\nrm{r^{-n}s^{-m}w_{0}}_{L^{1}(\bbR^{d})},
		\end{split}
	\end{comment}
	%{which contradicts to the upper bound \eqref{eq:Prtbt}.}%is a contradiction.
	
	%To prove the \textbf{Claim 1}, %above, 
	%we suppose that $\dot{P}^{r}(t)\to0$ as $t\to {+\ift}%T_{\text{max}}
	%$. That is, for any $\veps>0$, there exists $ {T>0}%T'\in (0, T_{\text{max}})
	%$ such that if $ {t\geq T}%t\in [T', T_{\text{max}})
	%$, then $|\dot{P}^{r}(t)|\leq \veps$.\\
	
	First, recall from the equality \eqref{eq:Prtdot} in Lemma \ref{lem:Prtdot} that we have
	\begin{equation}
		\begin{split}
			\dot{P}^{r}(t)&=m\iint_{\Pi}\frac{r^{n}}{s}[u^{s}(t,r,s)]^{2}drds%+\frac{1}{2}\int_{\lbrace s=0\rbrace}r^{n}[u^{s}(r,0)]^{2}dr
			+\frac{1}{2}\int_{0}^{\ift}r^{n}[u^{r}(t,r,0)]^{2}dr \gtrsim 
			\int_{0}^{\ift}r^{n}[u^{r}(t,r,0)]^{2}dr.
		\end{split}
	\end{equation}
	Then we use the fact
	$$ u^{r}(t,r,0)=-\bigg(\frac{m}{s}\psi(t,r,s)\bigg|_{s=0}+\rd_{s}\psi(t,r,0)\bigg)=-(m+1)\rd_{s}\psi(t,r,0) $$
	to get
	\begin{equation}\label{eq:dotPrlwrbd}
		\dot{P}^{r}(t)\gtrsim \int_{0}^{\ift}r^{n}[u^{r}(t,r,0)]^{2}dr \gtrsim_{d} \int_{0}^{\ift}r^{n}[\rd_{s}\psi(t,r,0)%u^{r}(r,0)
		]^{2}dr.
	\end{equation}
	Now let us focus on the term $\rd_{s}\psi(t,r,0)$: we use the lower bound \eqref{eq:rdspsir0bd} from Lemma \ref{lem:lowerbd} to get 
	\begin{equation}
		\begin{split}
			\rd_{s}\psi(t,r,0)&\gtrsim_{d} %\geq c_{d}
			r\iint_{\Pi}\frac{\br{r}^{n+1}\br{s}^{m+1}}{[(r+\br{r})^{2}+\br{s}^{2}]^{d/2+1}}w(t,\br{r},\br{s})d\br{r}d\br{s} =r\iint_{\Omg_{t}}\frac{\br{r}^{2n+1}\br{s}^{2m+1}%(\br{r}\br{s})^{2}
			}{[(r+\br{r})^{2}+\br{s}^{2}]^{d/2+1}}%w(\br{r},\br{s})
			d\br{r}d\br{s}.
		\end{split}
	\end{equation}
	Here, let us make the following claim:%an additional claim:
	\medskip
	
	\textbf{Claim.} There exists $\alp>0$, independent of $t$, that satisfies
	\begin{equation}\label{eq:lowerbd1/2}
		\iint_{E_{t}}r^{n}s^{m}drds\geq\frac{1}{2}\iint_{\Omg_{0}}r^{n}s^{m}drds,
	\end{equation}
	where $E_{t}:=\Omg_{t}\cap [[0,M]\times [0,\alp]]$.
	\medskip
	
	For now, we assume that \textbf{Claim} is true and present its proof later. Then we can get
	\begin{equation}
		\begin{split}
			\rd_{s}\psi(t,r,0)&\gtrsim_{d} r\iint_{\Omg_{t}}\frac{\br{r}^{2n+1}\br{s}^{2m+1}%(\br{r}\br{s})^{2}
			}{[(r+\br{r})^{2}+\br{s}^{2}]^{d/2+1}}%w(\br{r},\br{s})
			d\br{r}d\br{s} \geq r\iint_{E_{t}}\frac{\br{r}^{2n+1}\br{s}^{2m+1}}{[(r+\br{r})^{2}+\br{s}^{2}]^{d/2+1}}.
		\end{split}
	\end{equation}
	Then using the fact $\br{r}\leq M$ and $\br{s}\leq \alp$ for any $(\br{r},\br{s})\in E_{t}$, we have
	\begin{equation}
		\begin{split}
			\rd_{s}\psi(t,r,0)&\gtrsim_{d}  r\iint_{E_{t}}\frac{\br{r}^{2n+1}\br{s}^{2m+1}}{[(r+\br{r})^{2}+\br{s}^{2}]^{d/2+1}}d\br{r}d\br{s} \geq \frac{r}{[(r+M)^{2}+\alp^{2}]^{d/2+1}}\iint_{E_{t}}\br{r}^{2n+1}\br{s}^{2m+1}d\br{r}d\br{s}.
		\end{split}
	\end{equation}
	Now, we denote $ \eta_{t}:=(4|E_{t}|/\pi)^{1/2} $ and $B_{\eta_{t}}:=\lbrace (\br{r},\br{s})\in \Pi : \br{r}^{2}+\br{s}^{2}<\eta_{t}^{2}\rbrace$. Then we get
	\begin{equation}
		\begin{split}
			\rd_{s}\psi(t,r,0)&\gtrsim_{d} \frac{r}{[(r+M)^{2}+\alp^{2}]^{d/2+1}}\iint_{E_{t}}\br{r}^{2n+1}\br{s}^{2m+1}d\br{r}d\br{s}\\
			&\geq \frac{r}{[(r+M)^{2}+\alp^{2}]^{d/2+1}}\iint_{B_{\eta_{t}}}\br{r}^{2n+1}\br{s}^{2m+1}d\br{r}d\br{s} \simeq_{d} \frac{r}{[(r+M)^{2}+\alp^{2}]^{d/2+1}}\eta_{t}^{2d}.
		\end{split}
	\end{equation}
	Note that we have
	\begin{equation}
		\begin{split}
			\frac{\pi}{4}\eta_{t}^{2}&=|E_{t}|=\iint_{E_{t}}%\frac{\br{r}^{n}\br{s}^{m}}{\br{r}^{n}\br{s}^{m}}
			d\br{r}d\br{s} =\iint_{E_{t}}\frac{\br{r}^{n}\br{s}^{m}}{\br{r}^{n}\br{s}^{m}}d\br{r}d\br{s} \\
			&\geq \frac{1}{M^{n}\alp^{m}}\iint_{E_{t}}\br{r}^{n}\br{s}^{m}d\br{r}d\br{s} \geq \frac{1}{2M^{n}\alp^{m}}\iint_{\Omg_{0}}\br{r}^{n}\br{s}^{m}d\br{r}d\br{s}.
		\end{split}
	\end{equation}
	In the second inequality above, we used the estimate \eqref{eq:lowerbd1/2} from \textbf{Claim}. Hence, we obtain
	\begin{equation}
		\begin{split}
			\rd_{s}\psi(t,r,0)&\gtrsim_{d} \frac{r}{[(r+M)^{2}+\alp^{2}]^{d/2+1}}\eta_{t}^{2d}\\
			&\gtrsim_{d} \frac{r}{[(r+M)^{2}+\alp^{2}]^{d/2+1}}\bigg(\frac{1}{M^{n}\alp^{m}}\iint_{\Omg_{0}}\br{r}^{n}\br{s}^{m}d\br{r}d\br{s}\bigg)^{d}.
		\end{split}
	\end{equation}
	Plugging this estimate into the lower bound of $\dot{P}^{r}(t)$ in \eqref{eq:dotPrlwrbd}, we have
	\begin{equation}
		\begin{split}
			\dot{P}^{r}(t)&\gtrsim_{d} \int_{0}^{\ift}r^{n}[\rd_{s}\psi(t,r,0)]^{2}dr\\
			&\gtrsim_{d} \bigg(\frac{1}{M^{n}\alp^{m}}\iint_{\Omg_{0}}\br{r}^{n}\br{s}^{m}d\br{r}d\br{s}\bigg)^{2d}\int_{0}^{\ift}\frac{r^{n+2}}{[(r+M)^{2}+\alp^{2}]^{d+2}}dr>0.
		\end{split}
	\end{equation}
	From the above estimate, let us denote the last term, which is positive and independent of $t$, as $\br{c}>0$. Then if we take some time $T=2\bt/\br{c}>0$ where $\bt>0$ is from \eqref{eq:Prtbt}, we have
	\begin{equation}
		P^{r}(T)=P^{r}(0)+\int_{0}^{T}\dot{P}^{r}(\tau)d\tau>P^{r}(0)+\br{c}T=P^{r}(0)+2\bt,
	\end{equation}
	which contradicts to the bound \eqref{eq:Prtbt}.
	%Since the last term in the above estimate is positive and independent of $t$, we can 
	%this is a contradiction to the assumption that $\dot{P}^{r}(t)\to 0$ as $t\to {+\ift}%T_{\text{max}}
	%$. This finishes the proof of \textbf{Claim 1}.
	\medskip
	
	Now let us prove \textbf{Claim}: recall that the norm $\nrm{w}_{L^{1}(\Pi)}$ is conserved, which gives us
	$$ \iint_{\Omg_{t}}r^{n}s^{m}drds=\iint_{\Omg_{0}}r^{n}s^{m}drds.%\iint_{\Pi}w(t,r,s)drds=\iint_{\Pi}w_{0}(r,s)drds,%\quad t\in [0, T_{\text{max}}),
	$$
	In addition, recall that the negativity $\dot{P}^{s}(t)<0$ implies that $P^{s}(t)$ is non-increasing:
	$$ \iint_{\Omg_{t}}r^{n}s^{2m+1}%w(t,r,s)
	drds\leq \iint_{\Omg_{0}}r^{n}s^{2m+1}%w_{0}(r,s)
	drds.%\quad t\in [0, T_{\text{max}}).
	$$
	Using these, we choose $\alp>0$ as%From these facts, we can make the following observation: Denoting 
	$$ \alp=\bigg(\iint_{\Omg_{0}} r^{n}s^{2m+1}%w_{0}(r,s)
	drds\bigg)^{1/(m+1)}\bigg(\frac{1}{2}\iint_{\Omg_{0}}r^{n}s^{m}%w_{0}(r,s)
	drds\bigg)^{-1/(m+1)}. $$
	Then denoting $G_{t}:=\Omg_{t}\cap [[0,M]\times (\alp,\ift)]$, %$ E_{t}=\Omg_{t}\cap [0,M]\times [0,\alp] $, 
	we obtain
	\begin{equation*}
		\begin{split}
			\iint_{G_{t}%[0,M]\times(\alp,\ift)
			}r^{n}s^{m}%w(t,r,s)
			drds&\leq \iint_{G_{t}%[0,M]\times(\alp,\ift)
			}\frac{r^{n}s^{2m+1}}{\alp^{m+1}}%w(t,r,s)
			drds\leq\frac{1}{\alp^{m+1}}\iint_{\Omg_{t}%\Pi
			}r^{n}s^{2m+1}%w(t,r,s)
			drds\\
			&\leq\frac{1}{\alp^{m+1}}\iint_{\Omg_{0}%\Pi
			}r^{n}s^{2m+1}%w_{0}(r,s)
			drds =\frac{1}{2}\iint_{\Omg_{0}%\Pi
			}r^{n}s^{m}%w_{0}(r,s)
			drds.
		\end{split}
	\end{equation*}
	Since $E_{t}\cup G_{t}= \Omg_{t}$ and $E_{t}\cap G_{t}=\emptyset$, 
	%Since we are assuming that the support of $w$ is contained inside the the domain $[0,M]\times[0,\ift)$, 
	this implies
	\begin{equation}
		\begin{split}
			\iint_{E_{t}}r^{n}s^{m}drds&=\iint_{\Omg_{t}}r^{n}s^{m}drds-\iint_{G_{t}}r^{n}s^{m}drds\\
			&=\iint_{\Omg_{0}}r^{n}s^{m}drds-\iint_{G_{t}}r^{n}s^{m}drds \geq \frac{1}{2}\iint_{\Omg_{0}}r^{n}s^{m}drds.
		\end{split}
	\end{equation}
	Therefore, the proof of \textbf{Claim} is done and this completes the proof of Theorem \ref{thm:1}. 
\end{proof}
 
\bibliographystyle{plain}
\bibliography{biblography-JL-250930}

\begin{thebibliography}{10}

\bibitem{Chemin}
Jean-Yves Chemin.
\newblock Fluides parfaits incompressibles.
\newblock {\em Ast\'{e}risque}, (230):177, 1995.

\bibitem{CJ-axi}
Kyudong Choi and In-Jee Jeong.
\newblock On vortex stretching for anti-parallel axisymmetric flows.
\newblock {\em Amer. J. Math., to appear, arXiv:2110.09079}.

\bibitem{CJLglobal22}
Kyudong Choi, In-Jee Jeong, and Deokwoo Lim.
\newblock Global regularity for some axisymmetric {E}uler flows in
  $\mathbb{R}^{d}$.
\newblock {\em Proc. Amer. Math. Soc., to appear, arXiv:2212.11461}.

\bibitem{CJL_gwpbi}
Kyudong Choi, In-Jee Jeong, and Deokwoo Lim.
\newblock On global regularity of some bi-rotational {E}uler flows in
  {$\Bbb{R}^4$}.
\newblock {\em J. Differential Equations}, 425:627--660, 2025.

\bibitem{Danaxi}
Rapha\"{e}l\ Danchin.
\newblock Axisymmetric incompressible flows with bounded vorticity.
\newblock {\em Uspekhi Mat. Nauk}, 62(3(375)):73--94, 2007.

\bibitem{DE}
Theodore~D. Drivas and Tarek~M. Elgindi.
\newblock Singularity formation in the incompressible {E}uler equation in
  finite and infinite time.
\newblock {\em arXiv:2203.17221}.

\bibitem{FeSv}
Hao Feng and Vladim\'{i}r {\v{S}}ver{\'a}k.
\newblock On the {C}auchy problem for axi-symmetric vortex rings.
\newblock {\em Arch. Ration. Mech. Anal.}, 215:89--123, 2015.

\bibitem{GMT2023}
Stephen Gustafson, Evan Miller, and Tai-Peng Tsai.
\newblock Growth rates for anti-parallel vortex tube {E}uler flows in three and
  higher dimensions.
\newblock {\em preprint, arXiv:2303.12043}.

\bibitem{KhYa}
Boris Khesin and Cheng Yang.
\newblock Higher-dimensional {E}uler fluids and {H}asimoto transform:
  counterexamples and generalizations.
\newblock {\em Nonlinearity}, 34(3):1525--1542, 2021.

\bibitem{Limglobal23}
Deokwoo Lim.
\newblock Global regularity of some axisymmetric, single-signed vorticity in
  any dimension.
\newblock {\em Commun. Math. Sci.}, 23(1):279--298, 2025.

\bibitem{LJ_optimal}
Deokwoo Lim and In-Jee Jeong.
\newblock On the optimal rate of vortex stretching for axisymmetric {E}uler
  flows without swirl.
\newblock {\em Arch. Ration. Mech. Anal.}, 249(3):Paper No. 32, 31, 2025.

\bibitem{MP}
Carlo Marchioro and Mario Pulvirenti.
\newblock {\em Mathematical theory of incompressible nonviscous fluids},
  volume~96 of {\em Applied Mathematical Sciences}.
\newblock Springer-Verlag, New York, 1994.

\bibitem{Miller}
Evan Miller and Tai-Peng Tsai.
\newblock On the regularity of axisymmetric, swirl-free solutions of the euler
  equation in four and higher dimensions.
\newblock {\em preprint, arXiv:2204.13406}.

\bibitem{TaoUniv}
Terence Tao.
\newblock On the universality of the incompressible {E}uler equation on compact
  manifolds.
\newblock {\em Discrete Contin. Dyn. Syst.}, 38(3):1553--1565, 2018.

\bibitem{Yang}
Cheng Yang.
\newblock Vortex motion of the {E}uler and lake equations.
\newblock {\em J. Nonlinear Sci.}, 31(3):Paper No. 48, 21, 2021.

\end{thebibliography}

\end{document}